\DeclareMathOperator{\spann}{span}
\DeclareMathOperator{\im}{im} % like \ker
\DeclareMathOperator{\inte}{int}
\DeclareMathOperator{\sign}{sign}
\DeclareMathOperator{\add}{add} % like \min
\DeclareMathOperator{\e}{e} % like \ln
\DeclareMathOperator{\bd}{d} % brouwer degree
\newcommand{\RR}{{\mathbb R}}
\newcommand{\NN}{{\mathbb N}}
\newcommand{\expp}[1]{\e^{\, #1}}
\newcommand{\mal}{\circ}
\newcommand{\st}{\, | \,}
\newcommand{\dd}[2]{\frac{d #1}{d #2}}
\newcommand{\DD}[2]{\frac{\partial #1}{\partial #2}}
\newcommand{\kin}{kinetic }
\newcommand{\kinsub}{kinetic-order }
\title{Generalized mass action systems:\\
Complex balancing equilibria and sign vectors
of the stoichiometric and kinetic-order subspaces}
\author{Stefan M\"uller\thanks{Johann Radon Institute for Computational and Applied Mathematics,
Austrian Academy of Sciences, Altenberger Stra{\ss}e 69, 4040 Linz, Austria ({\tt stefan.mueller@ricam.oeaw.ac.at}).}
\and Georg Regensburger\thanks{INRIA Saclay -- \^{I}le de France, Project DISCO, L2S, Sup\'{e}lec,
3 rue Joliot-Curie, 91192 Gif-sur-Yvette Cedex, France ({\tt georg.regensburger@ricam.oeaw.ac.at}).
Supported by the Austrian Science Fund (FWF): J3030-N18.}}
\begin{document}

\maketitle

\begin{abstract}
Mass action systems capture chemical reaction networks in homogeneous and dilute solutions.
We suggest a notion of generalized mass action systems
that admits arbitrary %nonnegative
power-law rate functions
and serves as a more realistic model for reaction networks in intracellular environments.
In addition to the complexes of a network and the related stoichiometric subspace,
we introduce corresponding \kin complexes,
which represent the %nonnegative
exponents in the rate functions
and determine the \kinsub subspace.
We show that several results of Chemical Reaction Network Theory
carry over to the case of generalized mass action kinetics.
Our main result essentially states that,
if the sign vectors of the stoichiometric and \kinsub subspace coincide,
there exists a unique complex balancing equilibrium in every stoichiometric compatibility class.
However, in contrast to classical mass action systems,
multiple complex balancing equilibria in one stoichiometric compatibility class are possible in general.
\end{abstract}

\begin{keywords}
chemical reaction network theory, generalized mass action kinetics, complex balancing,
generalized Birch's theorem, oriented matroids
\end{keywords}

\begin{AMS}
%92C45, 37N99, 52C40
92C42, 37C25, 52C40
\end{AMS}

% 92 (1940-now) Biology and other natural sciences
%   92C (1991-now) Physiological, cellular and medical topics
%     92C45 (1991-now) Kinetics in biochemical problems (pharmacokinetics, enzyme kinetics, etc.) [See also 80A30]
%
% 92 (1940-now) Biology and other natural sciences
%   92E (1991-now) Chemistry [For biochemistry, see 92C40]
%     92E20 Classical flows, reactions, etc. [See also 80A30, 80A32]
%
% 37 (2000-now) Dynamical systems and ergodic theory [See also 26A18, 28Dxx, 34Cxx, 34Dxx, 35Bxx, 46Lxx, 58Jxx, 70-XX]
%   37N (2000-now) Applications
%     37N99 (2000-now) None of the above, but in this section
%
% 52 (1940-now) Convex and discrete geometry
%   52C (1991-now) Discrete geometry
%     52C40 (2000-now) Oriented matroids

\pagestyle{myheadings}
\thispagestyle{plain}
\markboth{STEFAN M\"ULLER AND GEORG REGENSBURGER}{GENERALIZED MASS ACTION SYSTEMS}

% %%%%%%%%%% %%%%%%%%%% %%%%%%%%%% %%%%%%%%%% %%%%%%%%%% %%%%%%%%%% %%%%%%%%%% %%%%%%%%%% %%%%%%%%%%  %%%%%%%%%%

\section{Introduction}

Dynamical systems arising from chemical reaction networks with mass action kinetics
are the subject of Chemical Reaction Network Theory (CRNT),
which was initiated by the work of Horn, Jackson, and Feinberg, cf.~\cite{HornJackson1972,Horn1972,Feinberg1972}.
In particular, this theory provides results about existence, uniqueness, and stability of equilibria
{\em independently} of rate constants (and initial conditions).
However, the validity of the underlying mass action law is limited;
it only holds for elementary reactions in homogeneous and dilute solutions.
In intracellular environments, which are highly structured
and characterized by macromolecular crowding,
the rate law has to be modified, cf.\ \cite{Clegg1984,Halling1989,Kuthan2001}.

Two types of modifications have been proposed:
``fractal reaction kinetics'' \cite{Kopelman1986,Kopelman1988,SchnellTurner2004,GrimaSchnell2006}
and the ``power-law formalism'' \cite{Savageau1969,Savageau1976,Savageau1992,Savageau1995}.
The names of the two approaches are a bit misleading,
since both approaches address the problem of dimensional restriction
(i.e.\ molecules confined to surfaces, channels, or fractal-like structures)
and both use power-laws.
More specifically, in fractal-like kinetics,
rate constants are time-dependent (via a power-law),
whereas the exponents of the species concentrations in the rate function
are the corresponding stoichiometric coefficients (as in mass action kinetics).
On the other hand,
in the power-law formalism,
rate constants are time-independent (as in mass action kinetics),
whereas the exponents of the species concentrations may be (nonnegative) real numbers
different from the respective stoichiometric coefficients.
For model selection, data have to be collected for many molecules and intracellular environments.
Recent data of binding kinetics in crowded media \cite{Bajzer2008,Neff2011}
suggest that the power-law formalism is the preferred model.

In this work,
we study the consequences of the power-law formalism
for chemical reaction networks.
In particular,
we demonstrate that several fundamental results of CRNT
carry over to the case of generalized mass action kinetics
(i.e.\ power-law rate functions).
There has been an early approach to account for generalized mass action kinetics \cite{HornJackson1972},
which entails a redefinition of the complexes of a network.
Here, we suggest a different approach, where we keep the original complexes,
but introduce additional ``\kin complexes'',
which determine the exponents of the species concentrations in the rate functions.
This has the advantage that
the underlying chemical reaction network
and thus properties like weak reversibility and deficiency remain the same.

From the \kin complexes, we obtain (in addition to the stoichiometric subspace) a ``\kinsub subspace'',
and it turns out that the generalization of a central result of CRNT
(concerned with the uniqueness and existence of a complex balancing equilibrium
in every stoichiometric compatibility class)
depends on the sign vectors of the two subspaces.
Our main result Theorem~\ref{genthm} essentially states that,
if these sign vectors are equal,
there exists a unique complex balancing equilibrium in every stoichiometric compatibility class. 
In general, however, there may be more than one complex balancing equilibrium in a stoichiometric compatibility class,
see Proposition~\ref{notunique} and Example~\ref{ex2}.

Chemical reaction networks with non mass action kinetics are also studied in
\cite{BanajiDonnellBaigent2007,BanajiCraciun2010,BanajiCraciun2009}.
In this approach, one is interested in conditions
that guarantee the uniqueness of equilibria.
If autocatalytic reactions are excluded
and if the dependence of the rate functions on the species concentrations corresponds to the stoichiometric matrix,
the structure of the stoichiometric matrix alone guarantees uniqueness.
Moreover, the properties of the stoichiometric matrix
can be translated into conditions for the species reaction graph.
As a consequence, this theory is applicable to many types of kinetics,
however, it does not address the existence of equilibria.
Existence and uniqueness of equilibria for general kinetics are discussed in~\cite{CraciunHeltonWilliams2008}.
The methods are based on homotopy invariance of the Brouwer degree in a related way to the approach in Section~\ref{subsec:surj}.

\subsection*{Organization of the work}

In the next section,
we recall the definition of mass action systems and several fundamental results of CRNT.
Then we introduce generalized mass action systems
and discuss the results that carry over easily to this framework.
In Section \ref{sec:cbe},
we study uniqueness and existence of complex balancing equilibria;
more specifically, we reformulate the problem
and study injectivity and surjectivity of a certain map,
(a simplified version of) which appears for example in toric and computational geometry or statistics.
In Section \ref{sec:exa},
we discuss two examples of generalized mass action systems.
Finally,
we draw our conclusions
and give an outlook to further lines of research.
In the Appendix,
we recall the relevant results on sign vectors of vector spaces and face lattices of polyhedral cones and polytopes.

\subsection*{Notation}
We denote the positive real numbers by $\RR_>$
and the nonnegative real numbers by $\RR_\ge$.
For a finite index set $I$,
we write $\RR^I$ for the real vector space of formal sums $x=\sum_{i \in I} x_i \, i$ with $x_i \in \RR$, and
$\RR^I_>$ and $\RR^I_\ge$ for the corresponding subsets.
Given $x \in \RR^I$,
we write $x>0$ if $x \in \RR^I_>$ and $x \ge 0$ if $x \in \RR^I_\ge$.
Further, we define $\e^x \in \RR^I_>$ and $\ln(x) \in \RR^I$ componentwise,
i.e.\ $(\e^x)_i = \e^{x_i}$ and $(\ln(x))_i = \ln(x_i)$, the latter for $x \in \RR^I_>$.
Finally, we define $x \mal y \in \RR^I$ for $x,y \in \RR^I$ as $(x \mal y)_i = x_i y_i$
and $x^y \in \RR_\ge$ for $x,y \in \RR^I_\ge$ as $x^y = \prod_{i \in I} x_i^{y_i}$,
where we set $0^0=1$.
\newpage
% %%%%%%%%%% %%%%%%%%%% %%%%%%%%%% %%%%%%%%%% %%%%%%%%%% %%%%%%%%%% %%%%%%%%%% %%%%%%%%%% %%%%%%%%%%  %%%%%%%%%%
\section{Chemical reaction networks}

In our presentation of CRNT,
we follow the surveys by Feinberg \cite{Feinberg1979,Feinberg1987,Feinberg1995a}
and Gunawardena \cite{Gunawardena}.

\begin{definition} \label{CRN}
A chemical reaction network $(\mathscr{S},\mathscr{C},\mathscr{R})$
consists of three finite sets:
(i) a set $\mathscr{S}$ of species,
(ii) a set $\mathscr{C} \subset \RR_\ge^\mathscr{S}$ of complexes,
and (iii) a set $\mathscr{R} \subset \mathscr{C} \times \mathscr{C}$ of reactions
with the following properties:
(a) $\forall y \in \mathscr{C} \colon \exists y' \in \mathscr{C}$ such that
$(y,y') \in \mathscr{R}$ or $(y',y) \in \mathscr{R}$
and (b) $\forall y \in \mathscr{C} \colon (y,y) \not\in \mathscr{R}$.
\end{definition}

Complexes are formal sums of species; they are the left-hand sides and right-hand sides of chemical reactions.
For $y \in \mathscr{C}$, we may write $y=\sum_{s \in \mathscr{S}} y_s \, s$,
where $y_s$ is the stoichiometric coefficient of species $s$.
As usual in chemistry, we write $y \to y'$ for a reaction $(y,y') \in \mathscr{R}$.
In a chemical reaction network, each complex appears in at least one reaction;
moreover, there are no reactions of the form $y \to y$.

A chemical reaction network $(\mathscr{S},\mathscr{C},\mathscr{R})$ gives rise to a directed graph
with complexes as nodes and reactions as edges.
Connected components $L_1,\ldots,L_l \subseteq \mathscr{C}$
are called {\em linkage classes},
strongly connected components are called {\em strong linkage classes},
and strongly connected components without outgoing edges $T_1,\ldots,T_t \subseteq \mathscr{C}$
are called {\em terminal strong linkage classes}.
Each linkage class must contain at least one terminal strong linkage classes, i.e. $t \ge l$.
The network $(\mathscr{S},\mathscr{C},\mathscr{R})$
is called {\em weakly reversible},
if the linkage classes coincide with the strong linkage classes
and hence with the terminal strong linkage classes.

From a dynamic point of view,
each reaction $y \to y' \in \mathscr{R}$
causes a change in species concentrations proportional to $y' - y \in \RR^\mathscr{S}$.
The change caused by all reactions lies in a subspace of $\RR^\mathscr{S}$
such that any trajectory in $\RR^\mathscr{S}_\ge$ lies in a coset of this subspace.
\begin{definition} \label{SSS}
Let $(\mathscr{S},\mathscr{C},\mathscr{R})$ be a chemical reaction network.
The stoichiometric subspace is defined as
\begin{equation*}
S = \spann \{ y'-y \in \RR^\mathscr{S} \st y \to y' \in \mathscr{R} \} \, .
\end{equation*}
Further, let $c'\in \RR^\mathscr{S}_>$.
The corresponding stoichiometric compatibility class is defined as
\begin{equation*}
(c'+S)_\geq = (c'+S) \cap \RR^\mathscr{S}_\ge \, .
\end{equation*}
\end{definition}

% %%%%%%%%%% %%%%%%%%%% %%%%%%%%%% %%%%%%%%%% %%%%%%%%%% %%%%%%%%%% %%%%%%%%%% %%%%%%%%%% %%%%%%%%%%  %%%%%%%%%%

\subsection{Mass action systems}

The rate of a reaction $y \to y' \in \mathscr{R}$
depends on the concentrations of the species involved.
The explicit form of the rate function $\mathscr{K}_{y \to y'} \colon \RR^\mathscr{S}_\ge \to \RR_\ge$
is determined by the underlying kinetics.
In the case of mass action kinetics,
it is a monomial in the concentrations $c \in \RR^\mathscr{S}_\ge$ of reactant species,
i.e.\ $\mathscr{K}_{y \to y'}(c) = k_{y \to y'} \, c^y$
with rate constant $k_{y \to y'} \in \RR_>$.
In other words, the stoichiometric coefficient of a species on the left-hand side of the reaction
equals the exponent of the corresponding concentration in the rate function.
It remains to formally introduce the rate constants.
\begin{definition} \label{MAK}
A mass action system $(\mathscr{S},\mathscr{C},\mathscr{R},k)$
is a chemical reaction network $(\mathscr{S},\mathscr{C},\mathscr{R})$
together with a vector $k \in \RR^\mathscr{R}_>$ of rate constants.
\end{definition}

\begin{definition} \label{ODE}
The ordinary differential equation (ODE)
associated with a mass action system $(\mathscr{S},\mathscr{C},\mathscr{R},k)$
is defined as
\begin{equation*}
\dd{c}{t}
= r(c)
\end{equation*}
with the species formation rate
\begin{equation*}
r(c)
= \sum_{y \to y' \in \mathscr{R}} k_{y \to y'} \, c^y \, (y' - y) \, .
\end{equation*}
\end{definition}

In order to rewrite the species formation rate,
we use the unit vectors $\omega_y \in \RR^\mathscr{C}$ corresponding to complexes $y \in \mathscr{C}$
and define

\vspace{1ex}
\begin{itemize}
\item
a linear map%
\footnote{
The corresponding matrix amounts to $Y_{sy}=y_s$.
}
$Y \colon \RR^\mathscr{C} \to \RR^\mathscr{S}$ with $Y \omega_y = y$,
\item
a nonlinear map $\Psi \colon \RR^\mathscr{S}_\ge \to \RR^\mathscr{C}$,
$c \mapsto \displaystyle{ \sum_{y \in \mathscr{C}}} c^{y} \, \omega_y$, and
\item
a linear map%
\footnote{
The corresponding matrix amounts to
$A_{yy'} = K_{y'y} - \delta_{yy'} \sum_{y'' \in \mathscr{C}} K_{yy''}$,
where $K \in \RR^{\mathscr{C} \times \mathscr{C}}$
with $K_{yy'} = k_{y \to y'}$ if $y \to y' \in \mathscr{R}$ and $K_{yy'} = 0$ otherwise.
}
$A \colon \RR^\mathscr{C} \to \RR^\mathscr{C}$,
$x \mapsto \displaystyle{\sum_{y \to y' \in \mathscr{R}}} k_{y \to y'} \, x_y \, (\omega_{y'}-\omega_y)$.
\end{itemize}
Now, the species formation rate can be decomposed as
\begin{align} \label{sfr}
r(c) &= \sum_{y \to y' \in \mathscr{R}} k_{y \to y'} \, c^y \, (y' - y) \\
& =   Y \sum_{y \to y' \in \mathscr{R}} k_{y \to y'} \, c^y \, (\omega_{y'} - \omega_y) \nonumber \\
& =   Y \sum_{y \to y' \in \mathscr{R}} k_{y \to y'} \, \Psi(c)_y \, (\omega_{y'} - \omega_y) \nonumber \\
& = Y A_{\,} \Psi(c) \, . \nonumber
\end{align}

Equilibria of the ODE associated with a mass action system
satisfying $A_{\,} \Psi(c) = 0$ and $c>0$ are called {\em complex balancing equilibria}.
The possibility of other (positive) equilibria
suggests the definition of the {\em deficiency} of a mass action system.
\begin{definition} \label{EQU}
Let $(\mathscr{S},\mathscr{C},\mathscr{R},k)$ be a mass action system.
The set of complex balancing equilibria
is defined as
\begin{equation*}
Z = \{ c \in \RR^\mathscr{S}_> \st A_{\,} \Psi(c) = 0 \} \, .
\end{equation*}
The deficiency of the system is defined as
\begin{equation*}
\delta = \dim(\ker(Y) \cap \im(A)) \, .
\end{equation*}
\end{definition}

Originally, the deficiency was defined differently.
As we will see in Proposition \ref{prodef},
the two definitions coincide under certain conditions on the network structure.
In Fig.\ \ref{fig:def}, we summarize the definitions associated with a mass action system
and depict their dependencies.
\begin{figure}[htb]
\begin{displaymath}
\boxed{
\xymatrix{
\mathscr{S} & \mathscr{C} \ar[ddl] \ar[d] \ar[dr] & \\
& Y \ar[d] & \Psi \ar[d] \\
S & \delta & Z \\
& A \ar[u] \ar[ur] & \\
& \mathscr{R} \ar[uul] \ar[u] & k \ar[ul]
}
}
\end{displaymath}
\begin{equation*}
\dd{c}{t} = Y A_{\,} \Psi(c)
\end{equation*}
\caption{
The mass action system
$(\mathscr{S},\mathscr{C},\mathscr{R},k)$:
Associated definitions and their dependencies. (Definitions at arrowheads depend on tails.)
\label{fig:def}
}
\end{figure}

% %%%%%%%%%% %%%%%%%%%% %%%%%%%%%% %%%%%%%%%% %%%%%%%%%% %%%%%%%%%% %%%%%%%%%% %%%%%%%%%% %%%%%%%%%%  %%%%%%%%%%

\subsection*{Results}

Now we are in a position to present several results of CRNT
related to the Deficiency Zero Theorem.
(The results are due to Horn, Jackson, and Feinberg \cite{HornJackson1972,Horn1972,Feinberg1972}.
For proofs, we refer the reader to the surveys \cite{Feinberg1979,Feinberg1995a} or \cite{Gunawardena}.)
As we will see later,
corresponding statements also hold in the case of generalized mass action kinetics.
We start with a foundational linear algebra result,
which can be proved using the Perron-Frobenius Theorem.

\newpage

\begin{theorem} \label{kerA}
Let $(\mathscr{S},\mathscr{C},\mathscr{R},k)$ be a mass action system
with the associated map $A$,
and let $T_1,\ldots,T_t \subseteq \mathscr{C}$ be the terminal strong linkage classes.
Then:
\begin{remunerate}
\item
for $i=1,\ldots,t \colon \exists \chi_i \in \RR^\mathscr{C}_\ge$ with $\supp(\chi_i)=T_i$
\item
$\ker(A)=\spann \{ \chi_1,\ldots,\chi_t \}$
\item
$\dim(\ker(A))=t$
\end{remunerate}
\end{theorem}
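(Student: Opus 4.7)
The plan is to view $A$ as (a variant of) the weighted graph Laplacian of the reaction digraph on $\mathscr{C}$ and to exploit the block structure induced by the decomposition of $\mathscr{C}$ into strong linkage classes. I will construct the $\chi_i$ in (i) by Perron--Frobenius applied class by class, derive (ii) by showing $x\in\ker(A)$ must vanish off the terminal classes, and then (iii) is an immediate count.

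\textbf{Step 1 (Constructing the $\chi_i$).} For a terminal strong linkage class $T_i$, observe that since $T_i$ has no outgoing edges, every reaction $y \to y'$ with $y \in T_i$ satisfies $y' \in T_i$. Hence the subspace $\RR^{T_i}\subset\RR^\mathscr{C}$ spanned by $\{\omega_y : y \in T_i\}$ is $A$-invariant, and the restricted map $A_i := A|_{\RR^{T_i}}$ is exactly the Laplacian of the strongly connected weighted subdigraph on $T_i$. Write $A_i = P_i - D_i$, where $P_i$ has nonnegative off-diagonal entries given by rate constants and $D_i$ is the diagonal matrix of column sums of $P_i$ (so all column sums of $A_i$ are zero). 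For large enough $\lambda>0$, $P_i + \lambda I$ is a nonnegative irreducible matrix (irreducibility comes from strong connectivity of $T_i$), and Perron--Frobenius produces a strictly positive eigenvector; translating back, this yields a strictly positive $\chi_i \in \RR^{T_i}_>$ with $A_i\chi_i = 0$. Extending by zero gives $\chi_i \in \RR^\mathscr{C}_\ge$ with $\supp(\chi_i) = T_i$ and $A\chi_i=0$, proving~(i). Linear independence of $\chi_1,\dots,\chi_t$ is immediate since their supports $T_1,\dots,T_t$ are pairwise disjoint.

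\textbf{Step 2 (Dimension upper bound).} The nontrivial content is $\dim\ker(A)\le t$. Let $S_1,\dots,S_m$ be all strong linkage classes, ordered so that terminal ones appear first ($S_1,\dots,S_t$) and the remainder $S_{t+1},\dots,S_m$ are ordered in reverse topological order of the condensation DAG. Because $A_{yy'}\neq 0$ for $y \neq y'$ requires a reaction $y'\to y$, this ordering makes the block matrix of $A$ \emph{block upper triangular}, with the additional property that all off-diagonal blocks $A_{ij}$ with $i\neq j$ and $i,j\le t$ vanish (there are no reactions between distinct terminal classes). For $i\le t$, the diagonal block $A_{ii}$ is the Laplacian of Step~1, with one-dimensional kernel $\RR\chi_i|_{T_i}$. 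For $i>t$, I claim $A_{ii}$ is invertible: its off-diagonal entries are nonnegative and its $y$-th column sum equals $-\ell_y$, where $\ell_y=\sum_{y\to y',\ y'\notin S_i} k_{y\to y'}$ is the ``leakage'' from $y\in S_i$ to complexes outside $S_i$. Non-terminality of $S_i$ ensures $\ell_y>0$ for at least one $y\in S_i$, and strong connectivity of $S_i$ then makes $-A_{ii}$ an irreducible diagonally dominant M-matrix, hence nonsingular (this is essentially Perron--Frobenius applied to a substochastic setting).

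\textbf{Step 3 (Conclusion).} Given $x\in\ker(A)$ with block decomposition $(x_1,\dots,x_m)$, the block upper triangular equation at row block $i$ reads $\sum_{j\ge i} A_{ij}x_j=0$. Solving from $i=m$ downward through $i=t+1$ using the invertibility of $A_{t+1,t+1},\dots,A_{mm}$ forces $x_{t+1}=\dots=x_m=0$. For $i\le t$, the equation collapses to $A_{ii}x_i=0$ (all off-diagonal terms vanish, either because the cross-blocks among terminal classes are zero or because $x_j=0$ for $j>t$), so $x_i\in\RR\,\chi_i|_{T_i}$. Therefore $x\in\spann\{\chi_1,\dots,\chi_t\}$, proving (ii) and (iii).

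The main obstacle is Step~2: both the Perron--Frobenius construction of the strictly positive $\chi_i$ on each terminal class and the M-matrix argument for invertibility of the non-terminal diagonal blocks rely on the interplay of strong connectivity with (zero, respectively strictly positive) leakage. Once the correct ordering exposing the block upper triangular structure is in place, the rest of the argument is bookkeeping.
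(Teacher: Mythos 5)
The paper itself offers no proof of Theorem~\ref{kerA}: it attributes the result to Horn, Jackson, and Feinberg, points to the cited surveys for proofs, and remarks only that it can be proved using the Perron--Frobenius Theorem. Your argument is exactly that standard route, and its overall structure is correct: the terminal classes span $A$-invariant coordinate subspaces carrying irreducible Laplacian blocks with zero column sums; ordering the strong linkage classes with the terminal ones first and the rest in reverse topological order makes $A$ block upper triangular, with vanishing cross-blocks between distinct terminal classes; the non-terminal diagonal blocks are irreducible, column-diagonally dominant with at least one strictly dominant column (positive leakage), hence nonsingular M-matrices; and back-substitution then confines $\ker(A)$ to the span of the $\chi_i$, whose disjoint supports give linear independence. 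The one step you should repair is the Perron--Frobenius application itself: the Perron eigenvector of $P_i+\lambda I$ is in general unrelated to $\ker(A_i)=\ker(P_i-D_i)$ when $D_i$ is not a scalar matrix, so ``translating back'' does not work as written. Apply the shift to the whole block instead: for $\lambda$ larger than the largest diagonal entry of $D_i$, the matrix $A_i+\lambda I$ is nonnegative and irreducible, and since the column sums of $A_i$ vanish on a terminal class, all column sums of $A_i+\lambda I$ equal $\lambda$; hence its Perron root is $\lambda$ and its (simple) Perron eigenvector is a strictly positive vector in $\ker(A_i)$. The simplicity of the Perron root also supplies the one-dimensionality of the terminal diagonal kernels that Step~3 uses silently. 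With that one-line correction the proof is complete and agrees with the argument the paper delegates to the literature.
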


\vspace{1ex}
The next result is an immediate consequence of Theorem \ref{kerA}.
\begin{corollary} \label{cor}
Let $(\mathscr{S},\mathscr{C},\mathscr{R})$ be a chemical reaction network.
If there exist rate constants $k$
such that the mass action system $(\mathscr{S},\mathscr{C},\mathscr{R},k)$ has a complex balancing equilibrium,
then $(\mathscr{S},\mathscr{C},\mathscr{R})$ is weakly reversible.
\end{corollary}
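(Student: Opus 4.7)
The plan is to exploit the positivity of $\Psi(c)$ at a complex balancing equilibrium together with the explicit description of $\ker(A)$ provided by Theorem~\ref{kerA}. Suppose $c \in \RR^\mathscr{S}_>$ is a complex balancing equilibrium, so $A \Psi(c) = 0$. Since $c > 0$, the vector $\Psi(c) = \sum_{y \in \mathscr{C}} c^y \omega_y$ has $\Psi(c)_y = c^y > 0$ for every $y \in \mathscr{C}$; in particular, $\supp(\Psi(c)) = \mathscr{C}$.

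Next, I would invoke Theorem~\ref{kerA} to write $\Psi(c) = \sum_{i=1}^t \lambda_i \chi_i$ for some $\lambda_i \in \RR$, where each $\chi_i \in \RR^\mathscr{C}_\ge$ satisfies $\supp(\chi_i) = T_i$. The key observation is that the terminal strong linkage classes $T_1, \ldots, T_t$ are pairwise disjoint subsets of $\mathscr{C}$, so the supports of the $\chi_i$ are disjoint. Consequently, for any $y \in T_i$, one has $\Psi(c)_y = \lambda_i (\chi_i)_y$, which forces $\lambda_i > 0$ for each $i$ (since both $\Psi(c)_y$ and $(\chi_i)_y$ are strictly positive). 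Taking supports then yields
\begin{equation*}
\mathscr{C} = \supp(\Psi(c)) = \bigcup_{i=1}^{t} \supp(\chi_i) = \bigcup_{i=1}^{t} T_i.
\end{equation*}

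Finally, I would translate this support identity into weak reversibility. Every complex belongs to some terminal strong linkage class, so each linkage class $L_j$ decomposes as a disjoint union of terminal strong linkage classes contained in it. However, terminal strong linkage classes have no outgoing reactions, so there are no edges (in either direction, as they are also sinks of the reaction digraph) between distinct terminal strong linkage classes. Since $L_j$ is connected in the underlying undirected graph, this decomposition must consist of a single piece, i.e.\ $L_j$ itself is a terminal strong linkage class. Thus linkage classes, strong linkage classes, and terminal strong linkage classes coincide, which is exactly weak reversibility.

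The proof is essentially a bookkeeping argument once Theorem~\ref{kerA} is in hand; the only mildly delicate step is the last one, where one must carefully distinguish between undirected connectedness (defining linkage classes) and the absence of outgoing directed edges (defining terminal strong linkage classes) to conclude that each linkage class contains only one terminal strong linkage class.
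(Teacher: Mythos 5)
Your proof is correct and follows exactly the route the paper intends: the corollary is stated there as an immediate consequence of Theorem~\ref{kerA}, namely that the strictly positive vector $\Psi(c)\in\ker(A)$ forces every complex into a terminal strong linkage class, whence each linkage class is a single terminal strong linkage class and the network is weakly reversible. Your careful handling of the last graph-theoretic step (no edges in either direction between distinct terminal strong linkage classes, plus connectedness of each linkage class) is a sound way to make the ``immediate'' step explicit.
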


\vspace{1ex}
If each linkage class contains exactly one terminal strong linkage class,
the deficiency is independent of the rate constants
and can be computed from basic parameters of the chemical reaction network.
The resulting formula was the original definition of the deficiency.
\begin{proposition} \label{prodef}
If a chemical reaction network $(\mathscr{S},\mathscr{C},\mathscr{R})$ is weakly reversible
(or more generally if $t=l$), then,
for all rate constants $k$,
the deficiency of the mass action system $(\mathscr{S},\mathscr{C},\mathscr{R},k)$
is given by $\delta = m-l-s$,
where $m$ is the number of complexes,
$l$ is the number of linkage classes,
and $s$ is the dimension of the stoichiometric subspace.
\end{proposition}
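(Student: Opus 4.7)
The plan is to exhibit $\delta$ as the dimension of the kernel of the stoichiometric map restricted to a natural ``lifted stoichiometric subspace'' on the complexes, and then to use graph-theoretic dimension counts together with Theorem~\ref{kerA}.

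First, I would introduce the auxiliary subspace
\begin{equation*}
\tilde S = \spann\{\omega_{y'}-\omega_y \st y \to y' \in \mathscr{R}\} \subseteq \RR^\mathscr{C},
\end{equation*}
which is the ``stoichiometric subspace at the level of complexes''. Two easy observations: $Y(\tilde S) = S$ by definition of $Y$ and $S$, and $\im(A) \subseteq \tilde S$ directly from the definition of $A$. The dimension of $\tilde S$ is a standard fact from the graph associated to $(\mathscr{S},\mathscr{C},\mathscr{R})$: each linkage class $L_i$ contributes $|L_i|-1$ independent edge-difference vectors (any spanning tree provides a basis, and the remaining reactions yield linear combinations), and different linkage classes contribute independent vectors supported on disjoint sets of coordinates. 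Summing gives $\dim(\tilde S) = m - l$.

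Next, I would invoke Theorem~\ref{kerA}, which yields $\dim(\ker A) = t$ and hence $\dim(\im A) = m - t$. Under the hypothesis $t = l$ (which holds in particular for weakly reversible networks), this coincides with $\dim(\tilde S) = m-l$. Combined with the inclusion $\im(A) \subseteq \tilde S$ already noted, one concludes
\begin{equation*}
\im(A) = \tilde S.
\end{equation*}
This equality is the one place the hypothesis $t = l$ is really used, and it is the heart of the argument; the rest is linear algebra.

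Finally, I would apply the rank-nullity theorem to the linear map $Y$ restricted to $\tilde S$. Since $Y(\tilde S) = S$ has dimension $s$, and $\ker(Y|_{\tilde S}) = \ker(Y) \cap \tilde S$, we get
\begin{equation*}
\dim(\tilde S) = \dim(\ker(Y) \cap \tilde S) + s,
\end{equation*}
hence $\dim(\ker(Y) \cap \tilde S) = m - l - s$. Using $\im(A) = \tilde S$, the left-hand side equals $\dim(\ker(Y) \cap \im(A)) = \delta$, yielding the claimed formula. The expected main obstacle is really just the graph-theoretic computation $\dim(\tilde S) = m - l$, which needs to be done carefully across linkage classes, but once that is in hand together with Theorem~\ref{kerA}, the argument is short.
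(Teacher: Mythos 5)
Your proof is correct. The paper itself gives no proof of Proposition~\ref{prodef} (it refers to the surveys of Feinberg and Gunawardena), and your argument is exactly the standard one found there: lift the stoichiometric subspace to $\RR^{\mathscr{C}}$ as $\spann\{\omega_{y'}-\omega_y \st y\to y'\in\mathscr{R}\}$, compute its dimension $m-l$ by spanning trees in the linkage classes, use Theorem~\ref{kerA} to get $\dim(\im(A))=m-t=m-l$ so the inclusion $\im(A)\subseteq\spann\{\omega_{y'}-\omega_y\}$ becomes an equality, and conclude by rank--nullity for $Y$ restricted to this space. One cosmetic remark: the symbol $\tilde S$ is already reserved in this paper for the kinetic-order subspace, so your auxiliary subspace of $\RR^{\mathscr{C}}$ should carry a different name to avoid confusion.
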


\vspace{1ex}
In the case of deficiency zero,
weak reversibility guarantees the existence of complex balancing equilibria.
\begin{proposition} \label{prodefzero}
If a chemical reaction network $(\mathscr{S},\mathscr{C},\mathscr{R})$ is weakly reversible and $\delta = 0$, then,
for all rate constants $k$,
the mass action system $(\mathscr{S},\mathscr{C},\mathscr{R},k)$
has a complex balancing equilibrium.
\end{proposition}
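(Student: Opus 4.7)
My plan is to recast the existence of a complex balancing equilibrium as a surjectivity statement for an auxiliary linear map, whose surjectivity turns out to be exactly the deficiency zero hypothesis combined with the structure of $\ker(A)$ from Theorem~\ref{kerA}.

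First, I would unpack the target set $\ker(A)\cap\RR^\mathscr{C}_>$. A complex balancing equilibrium is a $c\in\RR^\mathscr{S}_>$ with $A\Psi(c)=0$, i.e.\ $\Psi(c)\in\ker(A)\cap\RR^\mathscr{C}_>$. By weak reversibility, the terminal strong linkage classes coincide with the linkage classes $L_1,\ldots,L_l$, and Theorem~\ref{kerA} yields a basis $\chi_1,\ldots,\chi_l$ of $\ker(A)$ with $\supp(\chi_i)=L_i$ and $\chi_i>0$ on $L_i$. Since the $L_i$ partition $\mathscr{C}$, the vector $\chi^{*}:=\sum_i\chi_i$ is strictly positive on $\mathscr{C}$, and every positive element of $\ker(A)$ has the form $\sum_i\alpha_i\chi_i$ with all $\alpha_i>0$.

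Next, passing to componentwise logarithms turns everything linear. Using $\ln\Psi(c)=Y^\top\ln c$ and writing $e_{L_i}\in\RR^\mathscr{C}$ for the indicator of $L_i$, the existence of a complex balancing equilibrium becomes equivalent to the existence of $(\xi,\beta)\in\RR^\mathscr{S}\oplus\RR^l$ satisfying
\begin{equation*}
Y^\top\xi-\sum_{i=1}^l\beta_i\,e_{L_i}=\ln\chi^{*},
\end{equation*}
since then $c=\e^{\xi}>0$ yields $\Psi(c)=\sum_i\e^{\beta_i}\chi_i\in\ker(A)$. It therefore suffices to prove that the linear map $L\colon(\xi,\beta)\mapsto Y^\top\xi-\sum_i\beta_i e_{L_i}$, from $\RR^\mathscr{S}\oplus\RR^l$ to $\RR^\mathscr{C}$, is surjective.

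Finally, by duality $L$ is surjective iff $\ker(Y)\cap\spann\{e_{L_1},\ldots,e_{L_l}\}^{\perp}=0$. The key observation is that $\spann\{e_{L_i}\}^{\perp}=\im(A)$: the inclusion $\im(A)\subseteq\spann\{e_{L_i}\}^{\perp}$ holds because every reaction $y\to y'$ lies in a single linkage class, so $A$ preserves the coordinate sum over each $L_i$; equality then follows since both spaces have dimension $m-l$ by Theorem~\ref{kerA}. Hence surjectivity of $L$ is precisely the condition $\ker(Y)\cap\im(A)=0$, i.e.\ $\delta=0$, as assumed. I expect the main subtlety to be spotting this identification $\im(A)=\spann\{e_{L_i}\}^{\perp}$, which converts the positivity requirement on the orthant into a purely linear-algebraic dimension condition; once it is in place, the argument needs only rank counting and no analytic machinery such as Brouwer degree or fixed-point theorems.
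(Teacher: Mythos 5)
Your proposal is correct: reducing existence to the solvability of $Y^\top\xi-\sum_i\beta_i e_{L_i}=\ln\chi^*$ and then identifying $\spann\{e_{L_1},\ldots,e_{L_l}\}^{\perp}=\im(A)$ via weak reversibility ($t=l$) and a dimension count turns the hypothesis $\delta=\dim(\ker(Y)\cap\im(A))=0$ into exactly the needed surjectivity, and every step checks out. The paper itself gives no proof of this classical result (it defers to the surveys of Feinberg and Gunawardena), and your argument is essentially the standard linear-algebraic one found there, so there is nothing further to reconcile.
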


\vspace{1ex}
Theorem \ref{kerA} further implies
that the set of complex balancing equilibria
can be parametrized by the orthogonal of the stoichiometric subspace.
\begin{proposition} \label{proZ}
Let $(\mathscr{S},\mathscr{C},\mathscr{R},k)$ be a mass action system
with nonempty set $Z$ of complex balancing equilibria. 
Then
\begin{equation*}
Z = \{ c \in \RR^\mathscr{S}_> \st \ln(c)-\ln(c^*) \in S^\bot \}
= \{ c^* \mal \e^v \st v \in S^\bot \}
\end{equation*}
for any $c^* \in Z$.
\end{proposition}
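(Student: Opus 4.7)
The plan is to translate the equilibrium condition $A\Psi(c)=0$ into a condition on the componentwise ratio $\Psi(c)/\Psi(c^*)$ and then relate this ratio to $\ln(c)-\ln(c^*)$ via the exponent map. First I would observe that, since $Z\neq\emptyset$, Corollary~\ref{cor} gives that $(\mathscr{S},\mathscr{C},\mathscr{R})$ is weakly reversible, so the terminal strong linkage classes $T_1,\ldots,T_t$ coincide with the linkage classes $L_1,\ldots,L_l$ and partition $\mathscr{C}$. By Theorem~\ref{kerA}, $\ker(A)=\spann\{\chi_1,\ldots,\chi_t\}$ with $\supp(\chi_i)=T_i=L_i$, and these supports are pairwise disjoint.

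Next I would unpack the two sides of the desired equality into statements about $\Psi$. On the right, $\ln(c)-\ln(c^*)\in S^\bot$ says $(y'-y)\cdot(\ln(c)-\ln(c^*))=0$ for every $y\to y'\in\mathscr{R}$, i.e.\ the quantity $y\cdot(\ln(c)-\ln(c^*))=\ln\bigl(\Psi(c)_y/\Psi(c^*)_y\bigr)$ is constant along every edge of the reaction graph, hence constant on each linkage class $L_i$. On the left, $c\in Z$ says $\Psi(c)\in\ker(A)$; since $\Psi(c)>0$ componentwise and the $\chi_i$ have disjoint supports equal to $L_i$, this is equivalent to the existence of positive scalars $\alpha_i(c)$ with $\Psi(c)|_{L_i}=\alpha_i(c)\,\chi_i|_{L_i}$ for each $i$. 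The analogous decomposition holds for $c^*\in Z$ with scalars $\alpha_i(c^*)>0$.

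Combining these two observations gives the first equality. Indeed, if $c\in Z$, then on each $L_i$,
\begin{equation*}
\frac{\Psi(c)_y}{\Psi(c^*)_y}=\frac{\alpha_i(c)\,\chi_i(y)}{\alpha_i(c^*)\,\chi_i(y)}=\frac{\alpha_i(c)}{\alpha_i(c^*)},
\end{equation*}
which is constant on $L_i$, so $\ln(c)-\ln(c^*)\in S^\bot$. Conversely, if the ratio $\Psi(c)_y/\Psi(c^*)_y$ is constant on each $L_i$, say equal to $\lambda_i$, then $\Psi(c)=\sum_i\lambda_i\alpha_i(c^*)\chi_i\in\ker(A)$, so $c\in Z$. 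The second equality in the proposition is the change of variables $v=\ln(c)-\ln(c^*)$, which is a bijection $\RR^\mathscr{S}_>\leftrightarrow\RR^\mathscr{S}$, mapping $S^\bot$ onto itself and giving $c=c^*\mal\e^v$.

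The only subtle point is the correspondence between $\ker(A)\cap\RR^\mathscr{C}_>$ and positive combinations of the $\chi_i$ with disjoint supports; everything else is essentially bookkeeping. Weak reversibility (forced by $Z\neq\emptyset$) is what ensures that the supports $T_i$ actually cover all of $\mathscr{C}$, so that the coefficients $\alpha_i(c)$ are well defined and strictly positive for every $c>0$ with $\Psi(c)\in\ker(A)$.
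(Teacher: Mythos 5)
Your proof is correct, and it follows exactly the route the paper indicates: Proposition~\ref{proZ} is stated there without proof (the paper refers to the surveys and remarks that it is implied by Theorem~\ref{kerA}), and your argument is precisely that derivation, using weak reversibility (via Corollary~\ref{cor}) and the description of $\ker(A)$ as the span of the $\chi_i$ with disjoint supports to show $\Psi(c)\in\ker(A)$ is equivalent to the ratio $\Psi(c)_y/\Psi(c^*)_y$ being constant on each linkage class, i.e.\ $\ln(c)-\ln(c^*)\in S^\bot$. No gaps; the reparametrization $c=c^*\mal\e^v$ for the second equality is also handled correctly.
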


\vspace{1ex}
Finally, we recall a result
concerned with the existence and uniqueness of a complex balancing equilibrium
in every stoichiometric compatibility class.
It can be proved using methods from convex analysis.
\begin{theorem} \label{thm}
Let $(\mathscr{S},\mathscr{C},\mathscr{R},k)$ be a mass action system
with nonempty set $Z$ of complex balancing equilibria.
Then $Z$ meets every stoichiometric compatibility class in exactly one point.
\end{theorem}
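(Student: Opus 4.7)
The plan is to use Proposition~\ref{proZ} to parametrize $Z$ and then reduce the theorem to the existence and uniqueness of a minimizer of a strictly convex coercive function on $S^\bot$. Fix any $c^* \in Z$; by Proposition~\ref{proZ}, every element of $Z$ has the form $c^* \mal \expp{v}$ for a unique $v \in S^\bot$, and such an element is automatically positive. Hence, for a given $c' \in \RR^\mathscr{S}_>$, determining $Z \cap (c'+S)_\ge$ amounts to finding $v \in S^\bot$ such that $c^* \mal \expp{v} - c' \in S$.

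To solve this equation, I would introduce the function $f \colon S^\bot \to \RR$ defined by
\[
f(v) = \sum_{s \in \mathscr{S}} c^*_s \, \expp{v_s} - \sum_{s \in \mathscr{S}} c'_s \, v_s .
\]
Regarded as a function on all of $\RR^\mathscr{S}$, $f$ has gradient $c^* \mal \expp{v} - c'$ and Hessian $\mathrm{diag}(c^*_s \, \expp{v_s})$, which is positive definite. Restricted to the linear subspace $S^\bot$, $f$ is therefore strictly convex, and a point $v \in S^\bot$ is critical for $f|_{S^\bot}$ precisely when the ambient gradient $c^* \mal \expp{v} - c'$ is orthogonal to $S^\bot$, i.e.\ lies in $(S^\bot)^\bot = S$. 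Thus the critical points of $f$ on $S^\bot$ are exactly the solutions of the desired equation, and strict convexity immediately yields uniqueness.

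For existence I would establish coercivity of $f$ on $S^\bot$. Along a sequence with $\|v\| \to \infty$, at least one coordinate $v_s$ tends to $+\infty$ or $-\infty$; in the first case $c^*_s\, \expp{v_s}$ grows exponentially and dominates the linear term, while in the second case $\expp{v_s} \to 0$ is bounded whereas $-c'_s v_s \to +\infty$, using crucially that $c'_s > 0$. Since the remaining coordinates contribute terms bounded below on bounded intervals, $f(v) \to +\infty$; continuity and coercivity then guarantee a minimizer, which by strict convexity is the unique critical point. Applying the bijection $v \mapsto c^* \mal \expp{v}$ from $S^\bot$ to $Z$ produces the unique point of $Z$ in $(c'+S)_\ge$.

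The main obstacle is the coercivity step: the strict convexity and the identification of critical points with solutions of $c^* \mal \expp{v} \in c'+S$ are formal once the right Legendre-type function is guessed, but coercivity is where the strict positivity of the representative $c' \in \RR^\mathscr{S}_>$ of the stoichiometric compatibility class is essential, since otherwise the linear term could fail to counterbalance the directions along $S^\bot$ in which the exponential terms decay.
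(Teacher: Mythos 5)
Your proof is correct. Note that the paper does not prove Theorem~\ref{thm} itself: it cites the classical literature (``it can be proved using methods from convex analysis'') and later observes that, after the reformulation in Subsection~\ref{sec:F}, the theorem follows from Proposition~\ref{fulton} (Birch's theorem) in the case $V=\tilde{V}$. Your argument is essentially that convex-analysis proof made explicit and self-contained: the function $f(v)=\sum_{s\in\mathscr{S}} c^*_s\,\expp{v_s}-\sum_{s\in\mathscr{S}} c'_s\,v_s$ is the Legendre-type potential whose critical points on $S^\bot$ correspond, via Proposition~\ref{proZ} and the bijection $v\mapsto c^*\mal\expp{v}$, exactly to the points of $Z$ in $(c'+S)_\ge$; strict convexity gives uniqueness and coercivity (which uses $c'>0$, i.e.\ strict positivity of the representative of the class) gives existence. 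Compared with the paper's route you bypass the map $F$ and the basis choices $V$, $\tilde{V}$, which makes the classical case shorter and more elementary; what the paper's formulation buys is that it isolates precisely the structure (the sign vectors of $\im(V)$ and $\im(\tilde{V})$) needed to extend the statement to generalized kinetics, where your potential is no longer available because the exponential term would live over $\tilde{S}^\bot$ while the linear constraint involves $S$. One wording fix: in the coercivity step the ``remaining coordinates'' need not stay in bounded intervals, so instead of ``bounded below on bounded intervals'' you should use that each single-variable term $c^*_s\,\expp{t}-c'_s\,t$ is globally bounded below (its minimum is attained at $t=\ln(c'_s/c^*_s)$, using $c^*_s>0$ and $c'_s>0$), whence a single coordinate escaping to $\pm\infty$ already forces $f(v)\to+\infty$.
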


\vspace{1ex}
In Section \ref{sec:cbe},
we study the conditions under which a result analogous to Theorem \ref{thm}
holds in the case of generalized mass action kinetics.

% %%%%%%%%%% %%%%%%%%%% %%%%%%%%%% %%%%%%%%%% %%%%%%%%%% %%%%%%%%%% %%%%%%%%%% %%%%%%%%%% %%%%%%%%%%  %%%%%%%%%%

\subsection{Generalized mass action systems} \label{sec:gmas}

Chemical reactions occur between entire molecules
such that the stoichiometric coefficients are integers.
Under the assumption of mass action kinetics,
the rate functions are monomials in the concentrations of the reactant species.
However,
in Definition \ref{CRN}
we allowed nonnegative real stoichiometric coefficients
and hence ``generalized monomials'' as rate functions,
since all results presented above also hold in this generality.
This observation can be used to account for generalized mass action kinetics.
We outline two different approaches the second of which is the focus of this paper.

In the first approach \cite{HornJackson1972},
chemical reactions are redefined as pseudo-reactions
with the same net balance, but real stoichiometric coefficients.
For example, the reaction
\begin{equation*}
n_A A + n_B B \to n_C C
\end{equation*}
with $n_A$, $n_B$, $n_C \in \NN$ can be redefined as
\begin{equation*}
\nu_A A + \nu_B B + \nu_C C \to (\nu_A-n_A) A + (\nu_B-n_B) B + (\nu_C + n_C) C
\end{equation*}
with $\nu_A, \nu_B, \nu_C \in \RR_\ge$
and rate function $k \, [A]^{\nu_A} [B]^{\nu_B} [C]^{\nu_C}$.
The redefinition of chemical reactions does not affect the stoichiometric subspace,
however, it entails a new (and typically larger) set of complexes
and hence a new mass action system (with different properties).
For example, consider the (weakly) reversible chemical reaction network
\begin{equation*}
A + B \rightleftharpoons C
\end{equation*}
with two complexes and one linkage class.
Since the stoichiometric subspace $S=\spann \{ (-1,-1,1)^T \}$ has dimension one,
we obtain $\delta = 2 - 1 - 1 = 0$ by Proposition~\ref{prodef}.
In order to account for generalized mass action kinetics
specified by the rate functions $k_{A + B \to C} [A]^a [B]^b$ and $k_{C \to A + B} [C]^c$
with $a,b,c \in\RR_>$,
the system can be redefined by the pseudo-reactions
\begin{align*}
a A + b B &\to (a - 1) A + (b - 1) B + C \\
c \hspace{1pt} C &\to A + B + (c - 1) C
\end{align*}
with four complexes and two linkage classes.
This new system is not weakly reversible and has deficiency $\delta = 4 - 2 - 1 = 1$, again by Proposition~\ref{prodef}.

In this paper,
we present a different way to account for generalized mass action kinetics.
Most importantly,
we disentangle the definition of the rate functions
from the stoichiometric coefficients.
In particular,
we keep the integer stoichiometric coefficients,
but we allow ``generalized monomials'' as rate functions,
in which the exponents of the concentrations can be arbitrary nonnegative real numbers.
More formally,
we do not change the chemical reaction network,
but we associate with each complex a so-called {\em \kin complex},
which determines the exponents of the concentrations in the rate function of the respective reaction.
In the above example,
we associate the \kin complexes $a A + b B$ and $c \hspace{1pt} C$ with $A + B$ and $C$,
thereby specifying the rate functions $k_{A + B \to C} [A]^a [B]^b$ and $k_{C \to A + B} [C]^c$.
We obtain the following network, where we indicate association of kinetic complexes by dots:
\begin{equation*}
\begin{array}{ccc}
A + B & \rightleftharpoons & C \\
\vdots && \vdots \\
a A + b B && c \hspace{1pt} C
\end{array}
\end{equation*}
For an arbitrary chemical reaction network with generalized mass action kinetics,
the rate function $\mathscr{K}_{y \to y'} \colon \RR^\mathscr{S}_\ge \to \RR_\ge$
corresponding to reaction $y \to y' \in \mathscr{R}$
is given by $\mathscr{K}_{y \to y'}(c) = k_{y \to y'} \, c^{\tilde{y}}$,
where $\tilde{y}$ is the \kin complex associated with $y$.
\begin{definition} \label{genCRN}
A generalized chemical reaction network $(\mathscr{S},\mathscr{C},\tilde{\mathscr{C}},\mathscr{R})$
is a chemical reaction network $(\mathscr{S},\mathscr{C},\mathscr{R})$
together with a family $\tilde{\mathscr{C}}=(x_y)_{y\in \mathscr{C}}$ in $\RR_\ge^\mathscr{S}$ of \kin complexes,
where $\lvert \{x_y \st y\in \mathscr{C} \} \rvert = \lvert  \mathscr{C} \rvert$.
We write $\tilde{y}=x_y$ for the \kin complex associated with the complex $y\in \mathscr{C}$.
\end{definition}

A generalized chemical reaction network $(\mathscr{S},\mathscr{C},\tilde{\mathscr{C}},\mathscr{R})$
contains the chemical reaction network $(\mathscr{S},\mathscr{C},\mathscr{R})$;
moreover, it entails the fictitious chemical reaction network
$(\mathscr{S},\underline{\tilde{\mathscr{C}}},\underline{\mathscr{R}})$
where the set $\underline{\tilde{\mathscr{C}}} = \{\tilde{y} \st y \in \mathscr{C} \}$
has the same cardinality as $\mathscr{C}$ (by definition)
and the relation $\underline{\mathscr{R}}$ is isomorphic to $\mathscr{R}$,
i.e.\ $\tilde{y} \to \tilde{y}'\in \underline{\mathscr{R}}$ whenever $y \to y' \in \mathscr{R}$.
Hence the networks $(\mathscr{S},\mathscr{C},\mathscr{R})$
and $(\mathscr{S},\underline{\tilde{\mathscr{C}}},\underline{\mathscr{R}})$
give rise to the same directed graph (up to renaming of vertices).
A generalized chemical reaction network $(\mathscr{S},\mathscr{C},\tilde{\mathscr{C}},\mathscr{R})$ is called weakly reversible
if $(\mathscr{S},\mathscr{C},\mathscr{R})$ is weakly reversible.
Also the definitions of the stoichiometric subspace and the stoichiometric compatibility classes
carry over from $(\mathscr{S},\mathscr{C},\mathscr{R})$
to $(\mathscr{S},\mathscr{C},\tilde{\mathscr{C}},\mathscr{R})$, cf.\ Definition \ref{SSS}.
Additionally, we introduce the \kinsub subspace of a generalized chemical reaction network,
which coincides with the stoichiometric subspace of the fictitious network.

\begin{definition}
Let $(\mathscr{S},\mathscr{C},\tilde{\mathscr{C}},\mathscr{R})$ be a generalized chemical reaction network.
The \kinsub subspace is defined as
\begin{equation*}
\tilde{S} = \spann \{ \tilde{y'}-\tilde{y} \st y \to y' \in \mathscr{R} \} \, .
\end{equation*}
\end{definition}

For consistency, the name {\em \kin subspace} would be more appropriate for $\tilde{S}$ but
this name has already been given to a certain subspace of the stoichiometric subspace \cite{FeinbergHorn1977},
which coincides with the stoichiometric subspace if $t=l$.

For later use, we introduce the maps

\vspace{1ex}
\begin{itemize}
\item
$\tilde{Y} \colon \RR^\mathscr{C} \to \RR^\mathscr{S}$ with $\tilde{Y} \omega_y = \tilde{y}$ and
\item
$\tilde{\Psi} \colon \RR^\mathscr{S}_\ge \to \RR^\mathscr{C}$,
$\displaystyle{ c \mapsto \sum_{y \in \mathscr{C}} c^{\tilde{y}} \, \omega_y }$ ,
\end{itemize}
where we identify $\RR^\mathscr{C}$ and $\RR^{\underline{\tilde{\mathscr{C}}}}$.

\begin{definition} \label{genMAK}
A generalized mass action system $(\mathscr{S},\mathscr{C},\tilde{\mathscr{C}},\mathscr{R},k)$ is a
generalized chemical reaction network $(\mathscr{S},\mathscr{C},\tilde{\mathscr{C}},\mathscr{R})$
together with a vector $k \in \RR^\mathscr{R}_>$ of rate constants.
\end{definition}

\begin{definition} \label{genODE}
The ordinary differential equation (ODE)
associated with a generalized mass action system $(\mathscr{S},\mathscr{C},\tilde{\mathscr{C}},\mathscr{R},k)$
is defined as
\begin{equation*}
\dd{c}{t}
= \tilde{r}(c)
\end{equation*}
with the species formation rate
\begin{equation*}
\tilde{r}(c)
= \sum_{y \to y' \in \mathscr{R}} k_{y \to y'} \, c^{\tilde{y}} \, (y' - y) \, .
\end{equation*}
\end{definition}

As in Eqn.\ \eqref{sfr},
we can decompose the species formation rate of a generalized mass action system as
\begin{align*}
\tilde{r}(c) &= Y A_{\,} \tilde{\Psi}(c) \, .
\end{align*}

In analogy to Definition \ref{EQU}, equilibria satisfying $A_{\,} \tilde{\Psi}(c) = $ and $c > 0$
are called complex balancing equilibria;
they coincide with the complex balancing equilibria of the fictitious mass action system
$(\mathscr{S},\underline{\tilde{\mathscr{C}}},\underline{\mathscr{R}},k)$.
The deficiency, which quantifies the possibility of other equilibria,
coincides with the deficiency of the mass action system $(\mathscr{S},\mathscr{C},\mathscr{R},k)$.

\begin{definition}  \label{genEQU}
Let $(\mathscr{S},\mathscr{C},\tilde{\mathscr{C}},\mathscr{R},k)$ be a generalized mass action system.
The set of complex balancing equilibria is defined as
\begin{equation*}
\tilde{Z} = \{ c \in \RR^\mathscr{S}_> \st A_{\,} \tilde{\Psi}(c) = 0 \}
\end{equation*}
and the deficiency as
\begin{equation*}
\delta = \dim(\ker(Y) \cap \im(A)) \, .
\end{equation*}
\end{definition}

It remains to introduce the \kin deficiency,
which coincides with the deficiency of the fictitious system.

\begin{definition}
Let $(\mathscr{S},\mathscr{C},\tilde{\mathscr{C}},\mathscr{R},k)$ be a generalized mass action system.
The \kin deficiency is defined as
\begin{equation*}
\tilde{\delta} = \dim(\ker(\tilde{Y}) \cap \im(A)) \, .
\end{equation*}
\end{definition}

In Fig.\ \ref{fig:compare},
we summarize the definitions associated with a generalized mass action system
and depict their dependencies.
From the mass action system $(\mathscr{S},\mathscr{C},\mathscr{R},k)$,
we keep the stoichiometric subspace $S$ and the deficiency $\delta$,
whereas we use all definitions associated with the fictitious mass action system
$(\mathscr{S},\underline{\tilde{\mathscr{C}}},\underline{\mathscr{R}},k)$;
in particular, the \kinsub subspace $\tilde{S}$, the \kin deficiency $\tilde{\delta}$,
and the set $\tilde{Z}$ of complex balancing equilibria.

\begin{figure}[htb]
\begin{displaymath}
\boxed{
\xymatrix{
\mathscr{S} & \mathscr{C} \ar[ddl] \ar[d] &   & & \underline{\tilde{\mathscr{C}}} \ar[ddl] \ar[d] \ar[dr] & \\
& Y \ar[d] &                                  & & \tilde{Y} \ar[d] & \tilde{\Psi} \ar[d] \\
S & \delta &                                  & \tilde{S} & \tilde{\delta} & \tilde{Z} \\
& A \ar[u] &                                  & & A \ar[u] \ar[ur] & \\
& \mathscr{R} \ar[uul] \ar[u] & k \ar[ul]     & & \underline{\mathscr{R}} \ar[uul] \ar[u] & k \ar[ul]
}
}
\end{displaymath}
\begin{equation*}
\dd{c}{t} = Y A_{\,} \tilde{\Psi}(c)
\end{equation*}
\caption{
The generalized mass action system
$(\mathscr{S},\mathscr{C},\tilde{\mathscr{C}},\mathscr{R},k)$:
Associated definitions and their dependencies.
(For better readability, $k$ and $A$ are plotted twice.)
\label{fig:compare}
}
\end{figure}

% %%%%%%%%%% %%%%%%%%%% %%%%%%%%%% %%%%%%%%%% %%%%%%%%%% %%%%%%%%%% %%%%%%%%%% %%%%%%%%%% %%%%%%%%%%  %%%%%%%%%%

\subsection*{Results}

Now we return to the results of CRNT
that have been derived for mass action systems.
Since Theorem \ref{kerA} is concerned with the kernel of the linear map $A$,
the underlying kinetics is not relevant at all.
But also Corollary \ref{cor} and Propositions \ref{prodef}--\ref{proZ}
carry over easily to generalized mass action systems
if we consider the fictitious chemical reaction network $(\mathscr{S},\underline{\tilde{\mathscr{C}}},\underline{\mathscr{R}})$
and the fictitious mass action system $(\mathscr{S},\underline{\tilde{\mathscr{C}}},\underline{\mathscr{R}},k)$ defined above.
For reference, we present the analogous results.
\begin{proposition}
Let $(\mathscr{S},\mathscr{C},\mathscr{R})$ be a chemical reaction network.
If there exists a generalized mass action system $(\mathscr{S},\mathscr{C},\tilde{\mathscr{C}},\mathscr{R},k)$ 
with a complex balancing equilibrium,
then $(\mathscr{S},\mathscr{C},\mathscr{R})$ is weakly reversible.
\end{proposition}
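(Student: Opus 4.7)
The plan is to mimic the proof of Corollary~\ref{cor} verbatim, observing that the only property of the ``species pseudo-map'' one ever uses is its positivity on the positive orthant. Concretely, I would start from a complex balancing equilibrium $c \in \tilde{Z}$, so that $A_{\,}\tilde{\Psi}(c)=0$ with $c>0$, and argue that $\tilde{\Psi}(c)$ is a \emph{strictly} positive element of $\ker(A)$.

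First I would check positivity: since $c>0$ and every \kin complex $\tilde{y}$ lies in $\RR^\mathscr{S}_\ge$, each component $\tilde{\Psi}(c)_y = c^{\tilde{y}} = \prod_{s \in \mathscr{S}} c_s^{\tilde{y}_s}$ is a product of strictly positive numbers (with the convention $0^0=1$ irrelevant here since all $c_s>0$), hence $\tilde{\Psi}(c)>0$ in $\RR^\mathscr{C}$.

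Next I would invoke Theorem~\ref{kerA} applied to the linear map $A$. The theorem gives $\ker(A)=\spann\{\chi_1,\ldots,\chi_t\}$ with $\chi_i\ge 0$ and $\supp(\chi_i)=T_i$, the $i$th terminal strong linkage class. Writing $\tilde{\Psi}(c) = \sum_{i=1}^t \lambda_i \, \chi_i$ and using that $\tilde{\Psi}(c)$ has all components strictly positive, I would conclude that $\bigcup_{i=1}^t T_i = \mathscr{C}$: every complex must lie in the support of some $\chi_i$, and in particular no complex can be outside all terminal strong linkage classes.

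Finally I would translate this combinatorially. Each linkage class $L_j$ contains at least one terminal strong linkage class; but since every $y\in\mathscr{C}$ belongs to some $T_i$ and each $T_i$ is contained in a single linkage class, the $T_i$ exhaust $\mathscr{C}$ and hence coincide with the linkage classes themselves. By Definition's characterization of weak reversibility, the network $(\mathscr{S},\mathscr{C},\mathscr{R})$ is weakly reversible. There is no real obstacle here: the argument is essentially that of Corollary~\ref{cor}, because the rate constants $k$ and the underlying kinetics enter only through the existence of some strictly positive vector in $\ker(A)$, and the replacement of $\Psi$ by $\tilde{\Psi}$ preserves this positivity property.
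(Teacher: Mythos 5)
Your proof is correct, but it takes a different route from the paper. The paper's own proof is a one-line reduction: a complex balancing equilibrium of the generalized system is, by construction, a complex balancing equilibrium of the fictitious mass action system $(\mathscr{S},\underline{\tilde{\mathscr{C}}},\underline{\mathscr{R}},k)$, so Corollary~\ref{cor} gives weak reversibility of the fictitious network, and since $(\mathscr{S},\mathscr{C},\mathscr{R})$ and $(\mathscr{S},\underline{\tilde{\mathscr{C}}},\underline{\mathscr{R}})$ have the same directed graph, the original network is weakly reversible too. You instead bypass the fictitious system and reprove the corollary directly: you use that the linear map $A$ depends only on $(\mathscr{C},\mathscr{R},k)$, that $\tilde{\Psi}(c)>0$ for $c>0$, and then Theorem~\ref{kerA} to conclude that the terminal strong linkage classes cover $\mathscr{C}$. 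That is a perfectly valid and slightly more self-contained argument; what the paper's reduction buys is brevity and the reusable observation that \emph{all} statements about $\tilde{Z}$ transfer verbatim from the fictitious mass action system, which is exploited again in Propositions~\ref{prodef'}--\ref{proZ'}. One step of yours deserves tightening: from ``every complex lies in some $T_i$ and each $T_i$ sits in one linkage class'' it does not yet follow formally that the $T_i$ \emph{are} the linkage classes; you also need that no edges join distinct terminal strong linkage classes (any such edge would be an outgoing edge of one of them), so a connected linkage class covered by terminal classes must consist of exactly one of them. With that sentence added, your argument is complete.
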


\begin{proof}
Assume that $(\mathscr{S},\mathscr{C},\tilde{\mathscr{C}},\mathscr{R},k)$
and hence the mass action system $(\mathscr{S},\underline{\tilde{\mathscr{C}}},\underline{\mathscr{R}},k)$
have a complex balancing equilibrium.
By Corollary \ref{cor}, the chemical reaction network $(\mathscr{S},\underline{\tilde{\mathscr{C}}},\underline{\mathscr{R}})$
and hence $(\mathscr{S},\mathscr{C},\mathscr{R})$ are weakly reversible.
\end{proof}

\vspace{1ex}
\begin{proposition} \label{prodef'}
If a chemical reaction network $(\mathscr{S},\mathscr{C},\mathscr{R})$ is weakly reversible
(or more generally if $t=l$), then
the deficiencies of any generalized mass action system $(\mathscr{S},\mathscr{C},\tilde{\mathscr{C}},\mathscr{R},k)$
are given by $\delta = m-l-s$ and $\tilde{\delta} = m-l-\tilde{s}$,
where $m$ is the number of complexes,
$l$ is the number of linkage classes,
$s$ is the dimension of the stoichiometric subspace,
and $\tilde{s}$ is the dimension of the \kinsub subspace.
\end{proposition}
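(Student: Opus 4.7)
The plan is to reduce this to two applications of the already-established Proposition \ref{prodef}: one for the original network, and one for the fictitious network $(\mathscr{S},\underline{\tilde{\mathscr{C}}},\underline{\mathscr{R}})$ built from the kinetic complexes. The key observation is that both definitions $\delta = \dim(\ker(Y) \cap \im(A))$ and $\tilde{\delta} = \dim(\ker(\tilde{Y}) \cap \im(A))$ are literally deficiencies of mass action systems in the sense of Definition \ref{EQU}, for the networks $(\mathscr{S},\mathscr{C},\mathscr{R},k)$ and $(\mathscr{S},\underline{\tilde{\mathscr{C}}},\underline{\mathscr{R}},k)$ respectively (the linear map $A$ is determined by $\mathscr{R}$ and $k$, which we identify with $\underline{\mathscr{R}}$ and $k$ via the isomorphism between $\mathscr{R}$ and $\underline{\mathscr{R}}$).

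First, I would apply Proposition \ref{prodef} directly to $(\mathscr{S},\mathscr{C},\mathscr{R},k)$. The hypothesis is satisfied by assumption (either weak reversibility or $t = l$), and the conclusion gives $\delta = m - l - s$ using the number of complexes $m$, linkage classes $l$, and stoichiometric dimension $s$ of $(\mathscr{S},\mathscr{C},\mathscr{R})$.

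Next, I would verify that Proposition \ref{prodef} is equally applicable to the fictitious mass action system $(\mathscr{S},\underline{\tilde{\mathscr{C}}},\underline{\mathscr{R}},k)$. By Definition \ref{genCRN}, the map $y \mapsto \tilde{y}$ is injective, so $|\underline{\tilde{\mathscr{C}}}| = |\mathscr{C}| = m$; and since $\underline{\mathscr{R}}$ is isomorphic to $\mathscr{R}$ as noted in the paragraph following Definition \ref{genCRN}, the two networks share the same directed graph up to relabeling of vertices. Consequently they have the same number of linkage classes $l$, the same number of terminal strong linkage classes $t$, and $(\mathscr{S},\underline{\tilde{\mathscr{C}}},\underline{\mathscr{R}})$ is weakly reversible precisely when $(\mathscr{S},\mathscr{C},\mathscr{R})$ is. Hence the hypothesis of Proposition \ref{prodef} carries over. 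The stoichiometric subspace of the fictitious network is $\spann\{\tilde{y}' - \tilde{y} : y \to y' \in \mathscr{R}\} = \tilde{S}$, of dimension $\tilde{s}$, so Proposition \ref{prodef} yields $\tilde{\delta} = m - l - \tilde{s}$.

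There is no substantive obstacle here; the proof is essentially bookkeeping that exploits the fact that the generalized framework has been set up so the underlying graph structure is shared between the original and fictitious networks, with only the embedding into $\RR^\mathscr{S}$ differing.
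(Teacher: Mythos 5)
Your proof is correct and follows essentially the same route as the paper: both apply Proposition \ref{prodef} to the original mass action system and to the fictitious system $(\mathscr{S},\underline{\tilde{\mathscr{C}}},\underline{\mathscr{R}},k)$, using the fact that the two networks share the same directed graph (hence the same $m$, $l$, $t$, and weak reversibility) while the fictitious stoichiometric subspace is $\tilde{S}$. Your version merely spells out the bookkeeping (injectivity of $y \mapsto \tilde{y}$, isomorphism of $\mathscr{R}$ and $\underline{\mathscr{R}}$) that the paper leaves implicit.
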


\begin{proof}
Assume that $(\mathscr{S},\mathscr{C},\mathscr{R})$ 
and hence the chemical reaction network $(\mathscr{S},\underline{\tilde{\mathscr{C}}},\underline{\mathscr{R}})$
arising from $(\mathscr{S},\mathscr{C},\tilde{\mathscr{C}},\mathscr{R},k)$
are weakly reversible (or more generally that $t=l$).
The deficiency of the generalized mass action system
equals the deficiency of $(\mathscr{S},\mathscr{C},\mathscr{R},k)$,
and the \kin deficiency equals the deficiency of $(\mathscr{S},\underline{\tilde{\mathscr{C}}},\underline{\mathscr{R}},k)$.
By Proposition \ref{prodef}, the deficiencies of the two mass action systems are given by the  formulas stated.
\end{proof}

%\newpage

%\vspace{1ex}

\begin{proposition} \label{prodefzero'}
If a generalized chemical reaction network $(\mathscr{S},\mathscr{C},\tilde{\mathscr{C}},\mathscr{R})$
is weakly reversible and $\tilde{\delta} = 0$, then
any generalized mass action system $(\mathscr{S},\mathscr{C},\tilde{\mathscr{C}},\mathscr{R},k)$
has a complex balancing equilibrium.
\end{proposition}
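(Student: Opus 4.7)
The plan is to reduce to the mass action case via the fictitious system $(\mathscr{S},\underline{\tilde{\mathscr{C}}},\underline{\mathscr{R}},k)$ introduced after Definition~\ref{genCRN} and then invoke Proposition~\ref{prodefzero} directly. The whole argument is a bookkeeping exercise: everything appearing in the hypothesis of Proposition~\ref{prodefzero'} has been defined so that it matches the corresponding datum of the fictitious mass action system.

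First I would verify the two hypotheses for the fictitious system. Weak reversibility of $(\mathscr{S},\mathscr{C},\mathscr{R})$ transfers to $(\mathscr{S},\underline{\tilde{\mathscr{C}}},\underline{\mathscr{R}})$ because the two networks give rise to the same directed graph up to renaming of vertices (the relation $\underline{\mathscr{R}}$ was defined via the isomorphism $y\to y'\mapsto \tilde{y}\to\tilde{y}'$), and weak reversibility is a purely graph-theoretic property. Next, by definition the kinetic deficiency $\tilde{\delta}=\dim(\ker(\tilde Y)\cap\im(A))$ of the generalized system equals the ordinary deficiency of the fictitious mass action system, since $\tilde Y$ plays the role of $Y$ there. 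So the fictitious system is weakly reversible with deficiency zero.

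Next I would apply Proposition~\ref{prodefzero} to $(\mathscr{S},\underline{\tilde{\mathscr{C}}},\underline{\mathscr{R}},k)$ to obtain a $c\in\RR^\mathscr{S}_>$ with $A_{\,}\Psi_{\text{fict}}(c)=0$, where $\Psi_{\text{fict}}$ is the $\Psi$-map for the fictitious system, namely $c\mapsto\sum_{\tilde y\in\underline{\tilde{\mathscr{C}}}}c^{\tilde y}\,\omega_{\tilde y}$. Under the identification $\RR^\mathscr{C}\cong\RR^{\underline{\tilde{\mathscr{C}}}}$ employed in Section~\ref{sec:gmas}, this is precisely $\tilde{\Psi}(c)=\sum_{y\in\mathscr{C}}c^{\tilde y}\,\omega_y$. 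Hence $A_{\,}\tilde{\Psi}(c)=0$, and since $c>0$, it follows that $c\in\tilde{Z}$ is a complex balancing equilibrium of the generalized mass action system $(\mathscr{S},\mathscr{C},\tilde{\mathscr{C}},\mathscr{R},k)$.

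There is no real obstacle here beyond making the identifications explicit. The only subtlety worth writing out carefully is the equality of the map $A$ in the two systems (it depends solely on the rate constants $k$ and the isomorphic edge sets $\mathscr{R}$ and $\underline{\mathscr{R}}$) and the identification of $\Psi_{\text{fict}}$ with $\tilde{\Psi}$; once these are nailed down, the proof consists of a single application of Proposition~\ref{prodefzero} to the fictitious system, exactly in the style of the preceding proofs of this subsection.
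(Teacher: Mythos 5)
Your proposal is correct and follows exactly the paper's argument: reduce to the fictitious mass action system $(\mathscr{S},\underline{\tilde{\mathscr{C}}},\underline{\mathscr{R}},k)$, note that weak reversibility and $\tilde{\delta}=0$ translate into weak reversibility and deficiency zero for that system, and apply Proposition~\ref{prodefzero}. The paper's own proof is just a terser version of the same reduction, so your extra bookkeeping about $A$ and the identification of $\Psi_{\text{fict}}$ with $\tilde{\Psi}$ only makes the argument more explicit.
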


\begin{proof}
Assume that $(\mathscr{S},\mathscr{C},\mathscr{R})$ 
and hence the chemical reaction network $(\mathscr{S},\underline{\tilde{\mathscr{C}}},\underline{\mathscr{R}})$
arising from $(\mathscr{S},\mathscr{C},\tilde{\mathscr{C}},\mathscr{R},k)$
are weakly reversible. Additionally, assume $\tilde{\delta} = 0$.
By Proposition \ref{prodefzero},
the mass action system $(\mathscr{S},\underline{\tilde{\mathscr{C}}},\underline{\mathscr{R}},k)$
and hence $(\mathscr{S},\mathscr{C},\tilde{\mathscr{C}},\mathscr{R},k)$
have a complex balancing equilibrium.
\end{proof}

\vspace{1ex}
\begin{proposition} \label{proZ'}
Let $(\mathscr{S},\mathscr{C},\tilde{\mathscr{C}},\mathscr{R},k)$ be a generalized mass action system with
nonempty set $\tilde{Z}$ of complex balancing equilibria. 
Then
\begin{equation*}
\tilde{Z} = \{ c \in \RR^\mathscr{S}_> \st \ln(c)-\ln(c^*) \in \tilde{S}^\bot \}
= \{ c^* \mal \e^{\tilde{v}} \st \tilde{v} \in \tilde{S}^\bot \} 
\end{equation*}
for any $c^* \in \tilde{Z}$.
\end{proposition}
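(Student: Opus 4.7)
The plan is to mimic the pattern used in the preceding propositions of this subsection and reduce the statement to Proposition~\ref{proZ} applied to the fictitious mass action system $(\mathscr{S},\underline{\tilde{\mathscr{C}}},\underline{\mathscr{R}},k)$. The key observation is that the map $\tilde{\Psi}$ of the generalized system, $c\mapsto \sum_{y\in\mathscr{C}} c^{\tilde{y}}\,\omega_y$, coincides (after identifying $\RR^\mathscr{C}$ with $\RR^{\underline{\tilde{\mathscr{C}}}}$) with the map $\Psi$ associated with the fictitious mass action system, and the linear map $A$ is the same in both settings. Hence the defining equation $A\,\tilde{\Psi}(c)=0$ for $\tilde{Z}$ is literally the complex balancing equation for the fictitious system, so $\tilde{Z}$ equals the set $Z$ of complex balancing equilibria of $(\mathscr{S},\underline{\tilde{\mathscr{C}}},\underline{\mathscr{R}},k)$.

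Next, I would observe that the stoichiometric subspace of the fictitious network is, by construction,
\begin{equation*}
\spann\{\tilde{y}'-\tilde{y}\st y\to y'\in\mathscr{R}\}=\tilde{S},
\end{equation*}
that is, the stoichiometric subspace of the fictitious system is precisely the \kinsub subspace of the generalized system. With these two identifications in hand, Proposition~\ref{proZ} applied to the fictitious mass action system (whose set of complex balancing equilibria is nonempty because it equals $\tilde{Z}$) immediately yields
\begin{equation*}
\tilde{Z}=\{c\in\RR^\mathscr{S}_>\st \ln(c)-\ln(c^*)\in\tilde{S}^\bot\}=\{c^*\mal\e^{\tilde{v}}\st\tilde{v}\in\tilde{S}^\bot\}
\end{equation*}
for any $c^*\in\tilde{Z}$, which is exactly the claim.

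I do not expect any genuine obstacle here: the proposition is a direct translation and the heavy lifting (the parametrization of $Z$ by the orthogonal complement of the stoichiometric subspace) has already been carried out in Proposition~\ref{proZ}. The only point that requires a line of care is the canonical identification $\RR^\mathscr{C}\cong\RR^{\underline{\tilde{\mathscr{C}}}}$ via $\omega_y\leftrightarrow\omega_{\tilde{y}}$, which is well defined because Definition~\ref{genCRN} requires the \kin complexes to be pairwise distinct. Once that identification is made explicit, the proof reduces to citing Proposition~\ref{proZ}, in exactly the same style as the proofs of Propositions~\ref{prodef'} and~\ref{prodefzero'} already given.
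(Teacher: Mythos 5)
Your proof is correct and follows essentially the same route as the paper: identify $\tilde{Z}$ with the set of complex balancing equilibria of the fictitious mass action system $(\mathscr{S},\underline{\tilde{\mathscr{C}}},\underline{\mathscr{R}},k)$, note that its stoichiometric subspace is $\tilde{S}$, and apply Proposition~\ref{proZ}. The extra remark on the identification $\RR^\mathscr{C}\cong\RR^{\underline{\tilde{\mathscr{C}}}}$ is a fine detail the paper handles implicitly via Definition~\ref{genCRN}.
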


\begin{proof}
The complex balancing equilibria
of the mass action system  $(\mathscr{S},\underline{\tilde{\mathscr{C}}},\underline{\mathscr{R}},k)$
coincides with $\tilde{Z}$,
and its stoichiometric subspace
coincides with $\tilde{S}$, the \kinsub subspace of $(\mathscr{S},\mathscr{C},\tilde{\mathscr{C}},\mathscr{R},k)$.
By Proposition \ref{proZ},
the nonempty set $\tilde{Z}$ is given by the formula stated.
\end{proof}

One might conjecture that also Theorem \ref{thm} holds for generalized mass action systems.
However, an analogous result depends on both
the complexes $\mathscr{C}$ and the \kin complexes $\tilde{\mathscr{C}}$,
where $\mathscr{C}$ determines the stoichiometric subspace $S$
(and hence the stoichiometric compatibility classes $(c'+S)_\geq$),
whereas $\tilde{\mathscr{C}}$ determines the set $\tilde{Z}$ of complex balancing equilibria
(and the related \kinsub subspace $\tilde{S}$).
It turns out that the result depends on additional assumptions
concerning the sign vectors of the subspaces $S$ and $\tilde{S}$,
see Theorem \ref{genthm}.

% %%%%%%%%%% %%%%%%%%%% %%%%%%%%%% %%%%%%%%%% %%%%%%%%%% %%%%%%%%%% %%%%%%%%%% %%%%%%%%%% %%%%%%%%%%  %%%%%%%%%%

\section{Complex balancing equilibria} \label{sec:cbe}

In the following,
we consider a generalized mass action system
$(\mathscr{S},\mathscr{C},\tilde{\mathscr{C}},\mathscr{R},k)$
with stoichiometric subspace $S$,
\kinsub subspace $\tilde{S}$,
and nonempty set $\tilde{Z}$ of complex balancing equilibria.

From Proposition \ref{proZ'}
we know that $\tilde{Z} = \{ c^* \mal \e^{\tilde{v}} \st \tilde{v} \in \tilde{S}^\bot \}$ for any $c^* \in \tilde{Z}$.
We provide necessary and sufficient conditions
such that in every stoichiometric compatibility class $(c'+S)_\geq$
there is at most one complex balancing equilibrium.
Moreover,
we provide sufficient conditions
such that in every stoichiometric compatibility class
there is at least one complex balancing equilibrium.

The question of uniqueness is answered by the following result for arbitrary subspaces $S$ and $\tilde{S}$.
It involves the corresponding sets of sign vectors denoted by $\sigma(S)$ and  $\sigma(\tilde{S})$;
for the definition of sign vectors and related notions we refer the reader to the Appendix.
We note that sign vectors also appear in the study of multiple equilibria
that are not necessarily complex balancing 
\cite{Feinberg1995b,PerezMillanDickensteinShiuConradi2012}.
\begin{proposition} \label{unique}
Let $S,\tilde{S}$ be subspaces of $\RR^n$.
Then the two statements are equivalent:
\begin{remunerate}
\item
For all $c^*>0$ and $c'>0$, the intersection
$(c'+S)_\geq \cap \{ c^* \mal \e^{\tilde{v}} \st \tilde{v} \in \tilde{S}^\bot \}$
contains at most one element.
\item
$\sigma(S) \cap \sigma(\tilde{S}^\bot) = \{0\}$.
\end{remunerate}
\end{proposition}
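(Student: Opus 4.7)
The proof breaks into the two implications; direction (ii) $\Rightarrow$ (i) is a short diagnostic argument, whereas the contrapositive of (i) $\Rightarrow$ (ii) is the main construction.

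For (ii) $\Rightarrow$ (i), I would fix $c^*,c' > 0$ and suppose, for contradiction, that the intersection contains two distinct points $x_1,x_2$. Writing $x_i = c^* \mal \e^{\tilde{v}_i}$ with $\tilde{v}_i \in \tilde{S}^\bot$, one obtains
\begin{equation*}
u = x_1 - x_2 \in S \setminus \{0\} \quad \text{and} \quad v = \ln(x_1) - \ln(x_2) = \tilde{v}_1 - \tilde{v}_2 \in \tilde{S}^\bot.
\end{equation*}
Because $\ln \colon \RR_> \to \RR$ is strictly increasing, $\sign(u_i) = \sign(v_i)$ for every $i$, so $\sign(u) = \sign(v)$ is a nonzero element of $\sigma(S) \cap \sigma(\tilde{S}^\bot)$, contradicting (ii).

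For the contrapositive of (i) $\Rightarrow$ (ii), I would start from nonzero $u \in S$ and $v \in \tilde{S}^\bot$ with $\sign(u) = \sign(v)$ and exhibit a single stoichiometric compatibility class containing two distinct elements of $\{c^* \mal \e^{\tilde{v}} \st \tilde{v} \in \tilde{S}^\bot\}$. The plan is to define $c^* \in \RR^\mathscr{S}_>$ coordinate-wise so that $c^* \mal \e^v = c^* + u$. On indices $i$ with $\sign(u_i) \neq 0$ one has $v_i \neq 0$ of the same sign as $u_i$, so $\e^{v_i}-1$ and $u_i$ share that sign, and the choice
\begin{equation*}
c^*_i = \frac{u_i}{\e^{v_i} - 1}
\end{equation*}
is well-defined and strictly positive; on indices with $u_i = v_i = 0$, the equation $c^*_i \, (\e^{v_i}-1) = u_i$ holds trivially, and $c^*_i$ may be chosen as any positive number. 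A direct verification gives $c^*_i + u_i = c^*_i \, \e^{v_i}$ on every coordinate, hence $c^* + u = c^* \mal \e^v$. Setting $c'=c^*$, the two distinct points $c^*$ (corresponding to $\tilde{v} = 0$) and $c^* \mal \e^v$ (corresponding to $\tilde{v} = v$) both lie in $(c'+S)_\geq \cap \{c^* \mal \e^{\tilde{v}} \st \tilde{v} \in \tilde{S}^\bot\}$, violating (i).

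The only delicacy is the case split that handles the zero and nonzero components of the common sign vector separately; once the right $c^*$ is written down, the matching between the additive and multiplicative structures follows from the algebraic identity $u \, \e^v/(\e^v-1) = u/(\e^v-1) + u$ applied componentwise, and no topological or variational machinery is required.
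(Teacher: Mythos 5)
Your proof is correct and follows essentially the same route as the paper's: for uniqueness, monotonicity of the exponential/logarithm forces the additive difference in $S$ and the difference of exponents in $\tilde{S}^\bot$ to share a nonzero sign vector, and for the converse a positive $c^*$ is chosen so that the additive displacement $u$ is realized as a multiplicative one. The only cosmetic difference is that you use the pair $0, v \in \tilde{S}^\bot$ with the explicit formula $c^*_i = u_i/(\e^{v_i}-1)$, while the paper uses the pair $\tfrac{1}{2}\tilde{v}^1, \tilde{v}^1$ and merely asserts the existence of a suitable $c^*$.
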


\vspace{1ex}
\begin{proof}
$(\neg 1 \Rightarrow \neg 2)$:
Suppose there exist $u^1 \not= u^2 \in S$ and $\tilde{v}_1 \not= \tilde{v}_2 \in \tilde{S}^\bot$
such that $c' + u^1 = c^* \mal \e^{\tilde{v}_1}$ and $c' + u^2 = c^* \mal \e^{\tilde{v}_2}$
(for a certain $c'$ and a certain $c^*$).
Then $u^1 - u^2 = c^* \mal ( \e^{\tilde{v}_1} - \e^{\tilde{v}_2} )$
and by the monotonicity of the exponential function
\[
\sigma ( \underbrace{ u^1 - u^2 }_{\in \, S} )
= \sigma ( c^* \mal ( \e^{\tilde{v}^1} - \e^{\tilde{v}^2} ) )
= \sigma ( \e^{\tilde{v}^1} - \e^{\tilde{v}^2} )
= \sigma ( \underbrace{\tilde{v}^1 - \tilde{v}^2}_{\in \, \tilde{S}^\bot} ) \, .
\]
Hence
$\sigma(S) \cap \sigma(\tilde{S}^\bot) \neq \{0\}$.

$(\neg 2 \Rightarrow \neg 1)$:
Suppose that $0 \not= \tau \in \sigma(S) \cap \sigma(\tilde{S}^\bot)$.
Then there exist $u \in S$ and $\tilde{v}^1 \in \tilde{S}^\bot$
such that $\sigma(u) = \sigma(\tilde{v}^1) = \tau$.
Further, let $\tilde{v}^2 = \frac{1}{2} \tilde{v}^1$.
Then $\sigma(\tilde{v}^1 - \tilde{v}^2) = \tau$
and
\[
\sigma(u)
= \sigma(\tilde{v}^1 - \tilde{v}^2)
= \sigma ( \e^{\tilde{v}^1} - \e^{\tilde{v}^2} )
= \sigma ( c^* \mal ( \e^{\tilde{v}^1} - \e^{\tilde{v}^2} ) )
\]
for all $c^*>0$.
In particular, there is $c^*$
such that $u = c^* \mal ( \e^{\tilde{v}^1} - \e^{\tilde{v}^2} )$.
With $c'= c^* \mal \e^{\tilde{v}^1}$,
one has $c' - u = c^* \mal \e^{\tilde{v}^2}$
and hence both $c'$ and $c'-u$ are elements of $(c'+S)_\geq \cap \{ c^* \mal \e^{\tilde{v}} \st \tilde{v} \in \tilde{S}^\bot \}$.
\end{proof}

It follows in particular that if the sign vectors are equal, $\sigma(S) = \sigma(\tilde{S})$,
complex balancing equilibria are unique (in a stoichiometric compatibility class)
since then
\[
\sigma(S) \cap \sigma(\tilde{S}^\bot) = \sigma(S) \cap \sigma(\tilde{S})^\bot = \sigma(S) \cap \sigma(S)^\bot = \{0\}
\]
using Eqn.~\eqref{eq:orthsubspace}.
Note that this is only a sufficient condition;
for example, with $S=\spann \{ (-1,1) \}$ and $\tilde{S} = \spann \{ (-1,0) \}$,
we have $\sigma(S) \cap \sigma(\tilde{S}^\bot) = \{ 0 \}$ but $\sigma(S) \not = \sigma(\tilde{S})$.
However, it includes classical mass action kinetics where $S = \tilde{S}$
and each stoichiometric compatibility class contains at most one complex balancing equilibrium.
On the other hand, if $\sigma(S) \cap \sigma(\tilde{S}^\bot) \neq \{0\}$ and the underlying network is weakly reversible,
then such a generalized chemical reaction network has the capacity for multiple complex balancing equilibria,
as shown in the following result.

\begin{proposition} \label{notunique}
If a generalized chemical reaction network $(\mathscr{S},\mathscr{C},\tilde{\mathscr{C}},\mathscr{R})$
is weakly reversible and $\sigma(S) \cap \sigma(\tilde{S}^\bot) \neq \{0\}$,
there exist rate constants $k$
such that the generalized mass action system $(\mathscr{S},\mathscr{C},\tilde{\mathscr{C}},\mathscr{R},k)$
has more than one complex balancing equilibrium in some stoichiometric compatibility class.
\end{proposition}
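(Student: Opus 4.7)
The plan is to build two distinct complex balancing equilibria in a single stoichiometric compatibility class in two stages. First, from the sign-vector hypothesis I construct a positive $c^*$ and distinct $\tilde{v}^1, \tilde{v}^2 \in \tilde{S}^\bot$ such that $c^* \mal \e^{\tilde{v}^1}$ and $c^* \mal \e^{\tilde{v}^2}$ differ by an element of $S$. Second, I choose rate constants $k > 0$ making this very $c^*$ a complex balancing equilibrium; Proposition~\ref{proZ'} then automatically promotes both points to elements of $\tilde{Z}$, finishing the proof.

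For the first stage, I mimic the proof of Proposition~\ref{unique} (direction $\neg 2 \Rightarrow \neg 1$). Pick $0 \neq \tau \in \sigma(S) \cap \sigma(\tilde{S}^\bot)$ and choose $u \in S$, $\tilde{v} \in \tilde{S}^\bot$ with $\sigma(u) = \sigma(\tilde{v}) = \tau$. Set $\tilde{v}^1 := \tilde{v}$ and $\tilde{v}^2 := \tfrac{1}{2}\tilde{v}$, so that $\tilde{v}^1 \neq \tilde{v}^2$ and, by monotonicity of the exponential, $\sigma(\e^{\tilde{v}^1} - \e^{\tilde{v}^2}) = \tau$. On coordinates $s$ with $\tau_s \neq 0$ both $u_s$ and $\e^{\tilde{v}^1_s} - \e^{\tilde{v}^2_s}$ are nonzero with matching sign, so $c^*_s := u_s/(\e^{\tilde{v}^1_s} - \e^{\tilde{v}^2_s}) > 0$; on the remaining coordinates both sides vanish identically, so I am free to set $c^*_s := 1$. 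Then $c^* > 0$ and $c^* \mal (\e^{\tilde{v}^1} - \e^{\tilde{v}^2}) = u \in S$.

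For the second stage, weak reversibility makes every linkage class a terminal strong linkage class, so Theorem~\ref{kerA} applied to the uniform rate constants $k_0 \equiv 1$ produces a strictly positive $\chi \in \ker(A_0)$, where $A_0$ denotes the map $A$ built from $k_0$. Setting $k_{y \to y'} := \chi_y/(c^*)^{\tilde{y}} > 0$, a direct substitution shows that the nodewise balance equation $A \tilde{\Psi}(c^*) = 0$ reduces exactly to $A_0 \chi = 0$ and therefore holds. Hence $c^* \in \tilde{Z}$, and by Proposition~\ref{proZ'} the distinct points $c^* \mal \e^{\tilde{v}^1}$ and $c^* \mal \e^{\tilde{v}^2}$ both lie in $\tilde{Z}$ and share a stoichiometric compatibility class, their difference being $u \in S$. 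The main obstacle is this second stage, namely realizing an arbitrarily prescribed positive vector as a complex balancing equilibrium by a suitable choice of rate constants; weak reversibility is crucial, furnishing via Theorem~\ref{kerA} the positive kernel vector that powers the rescaling.
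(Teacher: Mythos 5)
Your proposal is correct and follows essentially the same two-stage route as the paper: use Proposition~\ref{unique} (whose $\neg 2 \Rightarrow \neg 1$ argument you reproduce explicitly) to obtain $c^*>0$ and two distinct points $c^* \mal \e^{\tilde{v}^1}$, $c^* \mal \e^{\tilde{v}^2}$ differing by an element of $S$, and then rescale rate constants so that $c^*$ becomes a complex balancing equilibrium, concluding via Proposition~\ref{proZ'}. The only deviation is in justifying the rescaling step: the paper invokes Lemma~\ref{circulation} (proved by an explicit cycle decomposition), whereas you extract a strictly positive kernel vector of $A$ with unit rate constants from Theorem~\ref{kerA} under weak reversibility, which is an equally valid way to obtain the needed positive balancing vector.
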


\begin{proof}
Let $\sigma(S) \cap \sigma(\tilde{S}^\bot) \neq \{0\}$.
By Proposition \ref{unique}, there exist $c^*>0$ and $c'> 0$
such that $(c'+S)_\geq \cap \{ c^* \mal \e^{\tilde{v}} \st \tilde{v} \in \tilde{S}^\bot \}$
contains more than one element.
Using Proposition \ref{proZ'},
it remains to show that there exist rate constants $k\in \RR^\mathscr{R}_>$ such that $c^*$
is a complex balancing equilibrium of $(\mathscr{S},\mathscr{C},\tilde{\mathscr{C}},\mathscr{R},k)$,
i.e.\
\begin{equation*}
A \, \tilde{\Psi}(c^*) =
\sum_{y \to y' \in \mathscr{R}} k_{y \to y'} \, (c^*)^{\tilde{y}} \, (\omega_{y'} - \omega_{y}) =
0 \, .
\end{equation*}
Since $(\mathscr{S},\mathscr{C},\tilde{\mathscr{C}},\mathscr{R})$
and hence $(\mathscr{S},\mathscr{C},\mathscr{R})$ are weakly reversible,
this is guaranteed by Lemma \ref{circulation}.
\end{proof}

In the proof of Proposition \ref{notunique},
we use the following result.
\begin{lemma} \label{circulation}
Let $(\mathscr{S},\mathscr{C},\mathscr{R})$ be a chemical reaction network.
Then, the following statements are equivalent:
\begin{remunerate}
\item $(\mathscr{S},\mathscr{C},\mathscr{R})$ is weakly reversible.
\item There exists $k \in \RR^\mathscr{R}_>$
such that $\sum_{y \to y' \in \mathscr{R}} k_{y \to y'} \, (\omega_{y'} - \omega_{y}) = 0$,
where $\omega_{y} \in \RR^\mathscr{C}$ denotes the unit vector corresponding to $y \in \mathscr{C}$.
\end{remunerate}
\end{lemma}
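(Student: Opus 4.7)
The plan is to reinterpret statement (2) as the linear algebra condition $A\mathbf{1} = 0$, where $\mathbf{1} \in \RR^\mathscr{C}$ is the all-ones vector. From the definition of $A$ in Section~2,
\begin{equation*}
A\mathbf{1} = \sum_{y \to y' \in \mathscr{R}} k_{y \to y'} \, (\omega_{y'} - \omega_{y}),
\end{equation*}
so (2) says precisely that $k$ is a positive circulation on the directed graph $(\mathscr{C},\mathscr{R})$: at every complex, total inflow equals total outflow. With this reformulation, both directions reduce to standard cycle reasoning on digraphs, and Theorem~\ref{kerA} handles one of them almost for free.

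For $(1) \Rightarrow (2)$ I would construct $k$ by a cycle-counting argument. Weak reversibility means each linkage class is strongly connected, so for every edge $e = (y \to y') \in \mathscr{R}$ there exists a directed path from $y'$ back to $y$, yielding a simple directed cycle $C_e \subseteq \mathscr{R}$ containing $e$. Define
\begin{equation*}
k_e := \bigl|\{ e' \in \mathscr{R} \st e \in C_{e'} \}\bigr|,
\end{equation*}
which is at least $1$ since $e \in C_e$. Exchanging the order of summation,
\begin{equation*}
\sum_{e \in \mathscr{R}} k_e \, (\omega_{y'} - \omega_{y})
= \sum_{e' \in \mathscr{R}} \; \sum_{e \in C_{e'}} (\omega_{\mathrm{head}(e)} - \omega_{\mathrm{tail}(e)}) = 0,
\end{equation*}
because the inner sum telescopes around the directed cycle $C_{e'}$. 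This gives $k \in \RR_{>}^\mathscr{R}$ with the desired balance.

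For $(2) \Rightarrow (1)$ I would apply Theorem~\ref{kerA} directly. The hypothesis is $\mathbf{1} \in \ker(A)$, so there exist $\lambda_1,\ldots,\lambda_t \in \RR$ with $\mathbf{1} = \sum_{i=1}^{t} \lambda_i \chi_i$, where $\supp(\chi_i) = T_i$. Distinct terminal strong linkage classes are vertex-disjoint, so $\supp\bigl(\sum_i \lambda_i \chi_i\bigr) \subseteq \bigsqcup_i T_i$. Since $\supp(\mathbf{1}) = \mathscr{C}$, one concludes $\mathscr{C} = \bigsqcup_i T_i$. Every complex therefore lies in some terminal strong linkage class; combined with the fact (noted in Section~2) that every linkage class contains at least one terminal strong linkage class, this forces each linkage class to coincide with a single terminal strong linkage class, i.e., weak reversibility.

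The main obstacle is purely bookkeeping in $(1) \Rightarrow (2)$: one must check both that the constructed $k$ is strictly positive on every edge and that the double sum does collapse, which follow respectively from $e \in C_e$ and the fact that each $C_{e'}$ is a directed cycle. The converse is essentially immediate once the reformulation $A\mathbf{1}=0$ is recognized, since the disjointness of the supports $T_i$ provided by Theorem~\ref{kerA} does all the work.
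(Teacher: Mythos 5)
Your proof is correct and follows essentially the same route as the paper: the forward direction uses the identical cycle-counting construction (with $k_{y\to y'}$ recording how many of the chosen cycles contain the reaction), and the converse recognizes the sum as $A$ applied to the all-ones vector and invokes Theorem~\ref{kerA}. In fact you spell out the support argument for $(2)\Rightarrow(1)$ in more detail than the paper, which simply cites Theorem~\ref{kerA}.
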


\begin{proof}
$(1 \Rightarrow 2)$: By weak reversibility,
there exists a cycle $y \to y' \to \ldots \to y$ for each reaction $y \to y' \in \mathscr{R}$ 
and we denote the set of reactions involved in this cycle by $C_{y \to y'}$.
Clearly, $\sum_{z \to z' \in {C_{y \to y'}}} (\omega_{z'} - \omega_{z}) = 0$
and hence
\begin{equation*}
\sum_{y \to y' \in \mathscr{R}} \, \sum_{z \to z' \in {C_{y \to y'}}} (\omega_{z'} - \omega_{z})
= \sum_{y \to y' \in \mathscr{R}} k_{y \to y'} \, (\omega_{y'} - \omega_{y})
= 0 \, ,
\end{equation*}
where $k_{y \to y'}>0$ records in how many cycles the reaction $y \to y'$ appears.

$(2 \Rightarrow 1)$:
We write $\sum_{y \to y' \in \mathscr{R}} k_{y \to y'} \, (\omega_{y'} - \omega_{y}) = A \, \Omega$
with $\Omega = (1, 1, \ldots, 1)^T \in \RR^\mathscr{C}_>$.
By Theorem \ref{kerA},
if $A \, \Omega = 0$, then $(\mathscr{S},\mathscr{C},\mathscr{R})$ is weakly reversible.
\end{proof}

The second implication is a basic fact from CRNT \cite{Horn1972,Feinberg1995a}.

\subsection{The map $F$} \label{sec:F}

In order to study uniqueness and existence
in a common framework,
we rephrase the problem.
We suppose that $\mathscr{S}$ contains $n$ species and fix an order among them.
Then we can identify $\RR^\mathscr{S}$ with $\RR^n$
such that $S$, $\tilde{S} \subseteq \RR^n$.
Further,
let $V=(v^1, \ldots , v^d)$ and $\tilde{V}=(\tilde{v}^1, \ldots , \tilde{v}^{\tilde{d}})$
be bases for $S^\bot$ and $\tilde{S}^\bot$, respectively.
In other words, $S^\bot = \im(V)$ and $\dim(S^\bot) = d$
and analogously $\tilde{S}^\bot = \im(\tilde{V})$ and $\dim(\tilde{S}^\bot) = \tilde{d}$.

An element in $(c'+S)_\geq \cap \{ c^* \mal \e^{\tilde{v}} \st \tilde{v} \in \tilde{S}^\bot \}$
corresponds to $u \in S$ and $\tilde{v} \in \tilde{S}^\bot$
such that $c^* \mal \e^{\tilde{v}} = c' + u$
or equivalently to $\lambda \in \RR^{\tilde{d}}$
such that
\begin{equation*}
\langle c^* \mal \expp{\sum_{j=1}^{\tilde{d}} \lambda_j \tilde{v}^j} , v^i \rangle = \langle c' , v^i \rangle
\quad \text{for} \quad
i=1, \ldots, d \, .
\end{equation*}
Hence, provided $c^* \in \tilde{Z}$,
uniqueness and existence (of a complex balancing equilibrium in every stoichiometric compatibility class)
correspond to injectivity and surjectivity of the following map:
\begin{align} \label{Fequ}
F \colon & \RR^{\tilde{d}} \to C^\circ \subseteq \RR^d \\
& \lambda \mapsto F(\lambda)
\quad \text{with} \quad
(F(\lambda))_i = \langle c^* \mal \expp{\sum_{j=1}^{\tilde{d}} \lambda_j \tilde{v}^j} , v^i \rangle \, , \nonumber
\end{align}
where $c^* >0$ and
\begin{equation*}
C^\circ = \{ \gamma \in \RR^d \st \gamma_i = \langle c' , v^i \rangle , \, c' \in \RR^n_> \} \, .
\end{equation*}
Note that $F$ depends on $c^*$.
It is instructive to reformulate the definition of $F$.
To this end, we express the columns of $V$ and $\tilde{V}$ by its rows,
\begin{align*}
V &=
( v^1 , \ldots , v^d ) =
( w^1 , \ldots , w^n )^T \\
\tilde{V} &=
( \tilde{v}^1 , \ldots , \tilde{v}^{\tilde{d}} ) =
( \tilde{w}^1 , \ldots , \tilde{w}^n )^T \, ,
\end{align*}
or equivalently $v^j_i = w^i_j$ and $\tilde{v}^j_i = \tilde{w}^i_j$,
and obtain:
\begin{align*}
(F(\lambda))_i
&= \langle c^* \mal \expp{\sum_{j=1}^{\tilde{d}} \lambda_j \tilde{v}^j} , v^i \rangle
= \sum_{k=1}^n c^*_k \, \expp{\sum_{j=1}^{\tilde{d}} \lambda_j \tilde{v}^j_k} \, v^i_k \\
&= \sum_{k=1}^n c^*_k \, \expp{\sum_{j=1}^{\tilde{d}} \lambda_j \tilde{w}^k_j} \, w^k_i
= \sum_{k=1}^n c^*_k \, \e^{\langle \lambda, \tilde{w}^k \rangle} \, w^k_i
\end{align*}
and
\begin{align*}
\gamma_i
&= \langle c' , v^i \rangle
= \sum_{k=1}^n c'_k \, v^i_k
= \sum_{k=1}^n c'_k \, w^k_i \, .
\end{align*}
Hence we can write
$F(\lambda) = \sum_{k=1}^n c^*_k \, \e^{\langle \lambda, \tilde{w}^k \rangle} \, w^k$
and $\gamma = \sum_{k=1}^n c'_k \, w^k$.
%\newpage
\begin{definition} \label{F}
Let $V \in \RR^{n \times d}$, $\tilde{V} \in \RR^{n \times \tilde{d}}$ with $n \ge d, \tilde{d}$ have full rank.
We write $V = (v^1,\ldots,v^d) = (w^1,\ldots,w^n)^T$
and $\tilde{V} = (\tilde{v}^1,\ldots,\tilde{v}^{\tilde{d}}) = (\tilde{w}^1,\ldots,\tilde{w}^n)^T$.
Further, let $c^*>0$.
We define
\begin{align*}
F \colon & \RR^{\tilde{d}} \to C^\circ \subseteq \RR^d \\
& \lambda \mapsto \sum_{k=1}^n c^*_k \, \e^{\langle \lambda, \tilde{w}^k \rangle} \, w^k \, ,
\end{align*}
where
\begin{equation*}
C^\circ = \{ \sum_{k=1}^n c'_k \, w^k \in \RR^d \st c' \in \RR^n_> \} \, .
\end{equation*}
\end{definition}

This definition is more transparent than the equivalent one given above.
It becomes clear that the set $C^\circ$ is the interior of the polyhedral cone generated by the vectors $(w^1,\ldots,w^n)$.
The map $F$ itself (in case $V = \tilde{V}$)
appears in toric geometry \cite{Fulton1993}, where it is related to moment maps,
and in statistics \cite{PachterSturmfels2005}, where it is related to exponential families.
There is a useful result \cite{Fulton1993},
which guarantees injectivity and surjectivity of $F$ in case $V = \tilde{V}$.
\begin{proposition} \label{fulton}
Let $V$, $\tilde{V}$, and $F$ be as in Definition \ref{F}.
If $V=\tilde{V}$,
then $F$ is a real analytic isomorphism of $\RR^d$ onto $C^\circ$ for all $c^*>0$.
\end{proposition}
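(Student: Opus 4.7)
The plan is to recognize $F$ as the gradient of a strictly convex real analytic function and then use standard convex analysis to establish bijectivity, with real analyticity of the inverse following from the inverse function theorem.

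Concretely, setting $\tilde{w}^k=w^k$ since $V=\tilde{V}$, I would introduce the potential
\begin{equation*}
g(\lambda) = \sum_{k=1}^n c^*_k\, \e^{\langle \lambda, w^k\rangle},
\end{equation*}
so that $\nabla g(\lambda) = F(\lambda)$ and
\begin{equation*}
\nabla^2 g(\lambda) = \sum_{k=1}^n c^*_k\, \e^{\langle \lambda, w^k\rangle}\, w^k (w^k)^T.
\end{equation*}
Because $V$ has full rank $d$, the rows $w^1,\ldots,w^n$ span $\RR^d$, and since all coefficients $c^*_k\e^{\langle\lambda,w^k\rangle}$ are strictly positive, $\nabla^2 g(\lambda)$ is positive definite for every $\lambda$. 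Hence $g$ is strictly convex and real analytic, and the first observation already yields injectivity of $F=\nabla g$ as well as $F(\RR^d)\subseteq C^\circ$ (since each summand $c^*_k\e^{\langle\lambda,w^k\rangle}w^k$ is a strictly positive combination of the generators of $C^\circ$).

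For surjectivity onto $C^\circ$, I would fix an arbitrary $\gamma\in C^\circ$, writing $\gamma=\sum_{k=1}^n c'_k w^k$ with all $c'_k>0$, and consider
\begin{equation*}
h_\gamma(\lambda) = g(\lambda) - \langle \gamma, \lambda\rangle,
\end{equation*}
whose gradient vanishes exactly where $F(\lambda)=\gamma$. Since $h_\gamma$ is strictly convex and real analytic, it suffices to establish coercivity, i.e.\ $h_\gamma(\lambda)\to+\infty$ as $\|\lambda\|\to\infty$; then $h_\gamma$ attains a (unique) minimum and the required $\lambda$ is that minimizer. Along any ray $\lambda=tu$ with $\|u\|=1$, if some $\langle u,w^k\rangle>0$ the exponential term dominates and $h_\gamma(tu)\to+\infty$; if $\langle u,w^k\rangle\le 0$ for every $k$, the exponential sum stays bounded and one needs the linear part to push $h_\gamma$ up, which requires $\langle\gamma,u\rangle<0$. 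This last inequality is exactly where the assumption $\gamma\in C^\circ$ enters: from $c'_k>0$ and $\langle w^k,u\rangle\le 0$ one gets $\langle\gamma,u\rangle\le 0$, with equality only if $\langle w^k,u\rangle=0$ for all $k$, which forces $u=0$ by the spanning property and contradicts $\|u\|=1$.

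The main obstacle is turning this ray-wise divergence into genuine coercivity of $h_\gamma$, since a separate argument per direction does not immediately give $h_\gamma(\lambda)\to+\infty$ uniformly in $\lambda/\|\lambda\|$. I would handle this by a compactness argument on the unit sphere: the recession function of the convex function $h_\gamma$ is $u\mapsto \sigma(u)-\langle\gamma,u\rangle$, where $\sigma(u)=+\infty$ if some $\langle u,w^k\rangle>0$ and $\sigma(u)=0$ otherwise; the case analysis above shows this recession function is strictly positive on the unit sphere, hence by continuity and compactness it is bounded below by some $\varepsilon>0$, which yields coercivity of $h_\gamma$ in the standard way.

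Once bijectivity is established, real analyticity of $F^{-1}\colon C^\circ\to\RR^d$ follows from the real analytic inverse function theorem applied at every point of $\RR^d$, using that the Jacobian $\nabla^2 g(\lambda)$ is invertible throughout, completing the proof that $F$ is a real analytic isomorphism of $\RR^d$ onto $C^\circ$.
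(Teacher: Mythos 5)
Your proof is correct. A point of comparison: the paper does not prove Proposition \ref{fulton} at all -- it quotes the result from the literature (Fulton's book on toric varieties; a variant of Birch's theorem) and later uses it as an ingredient in the proof of Theorem \ref{surj}, where the auxiliary map $\tilde{F}$ is inverted precisely by this proposition. Your argument -- writing $F=\nabla g$ for the potential $g(\lambda)=\sum_{k=1}^n c^*_k\,\e^{\langle\lambda,w^k\rangle}$, deducing injectivity and a nondegenerate Jacobian from strict convexity (positive definiteness of $\nabla^2 g$, using that $w^1,\ldots,w^n$ span $\RR^d$), obtaining surjectivity onto $C^\circ$ from coercivity of $h_\gamma=g-\langle\gamma,\cdot\rangle$, and getting analyticity of the inverse from the real analytic inverse function theorem -- is exactly the classical convex-duality route behind Birch's theorem, so it supplies a self-contained proof of what the paper treats as known rather than an alternative to an argument the paper gives; it is also genuinely different from the machinery the paper develops for the general case $V\neq\tilde{V}$ (Brouwer degree and face-lattice combinatorics in Theorem \ref{surj}), which could not replace your argument since that proof itself calls on Proposition \ref{fulton}. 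One small point of precision: the recession function $u\mapsto\sigma(u)-\langle\gamma,u\rangle$ is only lower semicontinuous (it jumps to $+\infty$), so the compactness step should be run on the closed subset of the unit sphere where $\langle u,w^k\rangle\le 0$ for all $k$; there $-\langle\gamma,u\rangle$ is continuous and, as you showed, strictly positive, which gives the bound $\varepsilon>0$, and positivity of the recession function in all nonzero directions then yields bounded sublevel sets of the closed convex function $h_\gamma$ (hence a minimizer, which is the desired preimage) by the standard fact from convex analysis. This is a cosmetic repair, not a gap.
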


This is a variant of Birch's Theorem \cite{PachterSturmfels2005,Sturmfels2002,CraciunDickensteinShiuSturmfels2009};
it implies Theorem \ref{thm}.
We will build on this result when we study the surjectivity of $F$,
but first we deal with its injectivity in case $V \not= \tilde{V}$.

% %%%%%%%%%% %%%%%%%%%% %%%%%%%%%% %%%%%%%%%% %%%%%%%%%% %%%%%%%%%% %%%%%%%%%% %%%%%%%%%% %%%%%%%%%%  %%%%%%%%%%

\subsection{Injectivity of $F$}

In the context of  multiple equilibria in mass action systems \cite{CraciunFeinberg2005}
and geometric modeling \cite{CraciunGarcia-PuenteSottile2010},
it was shown that
the map $F$ (in case $d=\tilde{d}$) is injective for all $c^*$
if and only if
$F$ is a local isomorphism for all $c^*$. 
We give an alternative proof of this result
and extend it to the case $d\not=\tilde{d}$,
where we use the sign vectors of the spaces $\im(V)$ and $\im(\tilde{V})$.

\begin{theorem} \label{inj}
Let $V$, $\tilde{V}$, and $F$ be as in Definition \ref{F}.
Then, the following statements are equivalent:
\begin{remunerate}
\item
$F$ is injective for all $c^*>0$.
\item
$F$ is an immersion for all $c^*>0$. ($\DD{F}{\lambda}$ is injective for all $\lambda$ and $c^*>0$.)
\item
$\sigma(\im(V)^\bot) \cap \sigma(\im(\tilde{V})) = \{0\}$.
\end{remunerate}
\end{theorem}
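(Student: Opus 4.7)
The plan is to establish $(1)\Leftrightarrow(3)$ and $(2)\Leftrightarrow(3)$, both by reducing to the same bookkeeping about positive diagonal scalings preserving sign vectors. First I would compute the Jacobian explicitly: differentiating the formula for $F$ gives
\[
\DD{F}{\lambda}(\lambda) = V^T D(\lambda)\,\tilde{V},
\]
where $D(\lambda)$ is the diagonal matrix with positive entries $c^*_k\,\e^{\langle\lambda,\tilde{w}^k\rangle}$. As $c^*$ ranges over $\RR^n_>$ and $\lambda$ over $\RR^{\tilde{d}}$, the matrix $D(\lambda)$ ranges over all positive diagonal matrices.

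For $(1)\Leftrightarrow(3)$, I would exploit the finite-difference identity
\[
F(\lambda^1)-F(\lambda^2) = V^T(a-b), \quad a_k = c^*_k\,\e^{\langle\lambda^1,\tilde{w}^k\rangle},\ b_k = c^*_k\,\e^{\langle\lambda^2,\tilde{w}^k\rangle},
\]
in which $a-b$ inherits the sign pattern of $\tilde{V}(\lambda^1-\lambda^2)\in\im(\tilde{V})$ via the monotonicity of the exponential, exactly as in the proof of Proposition \ref{unique}. If $F(\lambda^1)=F(\lambda^2)$, then $a-b\in\ker(V^T)=\im(V)^\bot$, so $\sigma(a-b)=\sigma(\tilde{V}(\lambda^1-\lambda^2))$ is a common sign vector; condition $(3)$ then forces it to be zero, hence $\lambda^1=\lambda^2$ since $\tilde{V}$ has full column rank. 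For the converse, given a nonzero $\tau\in\sigma(\im(V)^\bot)\cap\sigma(\im(\tilde{V}))$ realised as $\tau=\sigma(u)=\sigma(\tilde{V}\mu)$ with $u\in\im(V)^\bot$ and $\mu\neq 0$, I would set $\tilde{v}:=\tilde{V}\mu$ and choose $c^*_k := u_k/(\e^{\tilde{v}_k}-1)$ whenever $\tilde{v}_k\neq 0$ (both factors have sign $\tau_k$) and $c^*_k>0$ arbitrary otherwise. Then $F(\mu)-F(0)=V^T u = 0$ with $\mu\neq 0$, violating injectivity.

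For $(2)\Leftrightarrow(3)$, the Jacobian computation reduces $(2)$ to the assertion that $V^T D\tilde{V}$ is injective for every positive diagonal $D$. A nontrivial kernel consists of some $\mu\neq 0$ with $D\tilde{V}\mu\in\ker(V^T)=\im(V)^\bot$; writing $\tilde{v}:=\tilde{V}\mu\in\im(\tilde{V})\setminus\{0\}$ and $u:=D\tilde{v}\in\im(V)^\bot$, positivity of $D$ gives $\sigma(u)=\sigma(\tilde{v})$, so a nonzero common sign vector appears. Conversely, given any matched pair $u,\tilde{v}$ realising such a common sign vector, I would construct $D$ by the same componentwise recipe as above (at $\lambda=0$) to exhibit a kernel element.

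The main obstacle is not any single step but keeping the sign-vector bookkeeping transparent—in particular, making sure the same combinatorial correspondence between nonzero common sign vectors and failure of injectivity (resp.\ immersivity) drives both equivalences, so that the global statement $(1)$ and the infinitesimal statement $(2)$ both turn out to be captured by the single combinatorial obstruction $(3)$.
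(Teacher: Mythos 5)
Your proposal is correct and follows essentially the same route as the paper: the Jacobian computation $\DD{F}{\lambda}=V^T D(\lambda)\tilde{V}$ with $D$ ranging over positive diagonal matrices, and the sign-vector argument via monotonicity of the exponential. The only cosmetic difference is that for $(1)\Leftrightarrow(3)$ you inline the finite-difference/monotonicity computation, whereas the paper simply invokes Proposition~\ref{unique}, whose proof is that very argument.
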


\vspace{1ex}
\begin{proof}
We use $F$ in the form of Eqn.\ \eqref{Fequ}.

$(1 \Leftrightarrow 3)$: By Proposition \ref{unique}. \\
Using $S^\bot = \im(V)$ and $\tilde{S}^\bot = \im(\tilde{V})$,
the injectivity of $F$ for all $c^*$ is equivalent to the existence of at most one element in
$(c'+S)_\geq \cap \{ c^* \mal \e^{\tilde{v}} \st \tilde{v} \in \tilde{S}^\bot \}$
for all $c'$ and $c^*$.

$(\neg 2\Rightarrow\neg 3)$:
Suppose that $\DD{F}{\lambda}$ is not injective
(for a certain $c^*$ and a certain $\lambda$),
i.e.\ there exists a nonzero $\lambda' \in \RR^{\tilde{d}}$
such that $\DD{F}{\lambda} \lambda' = 0$.
Since
\begin{equation*}
\sum_{j=1}^{\tilde{d}} \DD{F_i}{\lambda_j} \, \lambda_j' =
\sum_{j=1}^{\tilde{d}}
\langle c^* \mal \expp{\sum_{k=1}^{\tilde{d}} \lambda_k \tilde{v}^k} \mal \tilde{v}^j , v^i \rangle \lambda_j' =
\langle \underbrace{c^* \mal \expp{\sum_{k=1}^{\tilde{d}} \lambda_k \tilde{v}^k}}_{c} \mal
\underbrace{\sum_{j=1}^{\tilde{d}} \lambda_j' \, \tilde{v}^j}_{\tilde{v}'} , v^i \rangle \, ,
\end{equation*}
this is equivalent to the existence of $c>0$ and $\tilde{v}' \in \im(\tilde{V})$
such that $\langle c \mal \tilde{v}', v^i \rangle = 0$ for $i=1,\ldots,d$,
which in turn is equivalent to $c \mal \tilde{v}' \in \im(V)^\bot$.
Clearly
$\sigma( c \mal \tilde{v}' )
= \sigma ( \tilde{v}' )$
and hence
$\sigma(\im(V)^\bot) \cap \sigma(\im(\tilde{V})) \neq \{0\}$.

$(\neg 3\Rightarrow\neg 2)$:
Suppose that $0 \not= \tau \in \sigma(\im(V)^\bot) \cap \sigma(\im(\tilde{V}))$.
Then, there exist $u \in \im(V)^\bot$ and $\tilde{v}' \in \im(\tilde{V})$
such that $\sigma(u) = \sigma(\tilde{v}') = \tau$.
Clearly, one can choose $c>0$ such that $u = c \mal \tilde{v}'$
and hence $c \mal \tilde{v}' \in \im(V)^\bot$.
As demonstrated in the previous step,
this is equivalent to the existence of $c^*>0$ and $\lambda, \lambda' \neq 0 \in \RR^{\tilde{d}}$
such that $\DD{F}{\lambda} \lambda' = 0$.
\end{proof}

Finally, we note that
for $d=\tilde{d}$,
Statement 3 in Theorem \ref{inj} is symmetric with respect to $V$ and $\tilde{V}$.
\begin{corollary}
Let $V$, $\tilde{V}$ be as in Definition \ref{F} with $d=\tilde{d}$.
Then,
$\sigma(\im(V)^\bot) \cap \sigma(\im(\tilde{V})) = \{0\}$
if and only if
$\sigma(\im(\tilde{V})^\bot) \cap \sigma(\im(V)) = \{0\}$.
\end{corollary}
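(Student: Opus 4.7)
The plan is to invoke Theorem \ref{inj} twice, with $V$ and $\tilde{V}$ in the two possible orderings, and then show that the resulting two analytic conditions on the Jacobian of $F$ (and its counterpart) are transposes of each other, hence simultaneously true or simultaneously false.

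First I would note that Theorem \ref{inj} shows that the sign-vector condition $\sigma(\im(V)^\bot) \cap \sigma(\im(\tilde{V})) = \{0\}$ is equivalent to $F$ being an immersion for every $c^*>0$. A short calculation from the definition $F(\lambda) = \sum_{k=1}^n c^*_k \, \e^{\langle \lambda, \tilde{w}^k \rangle} \, w^k$ gives the Jacobian
\[
\DD{F}{\lambda}(\lambda) = V^T D(\lambda,c^*) \, \tilde{V},
\]
where $D(\lambda,c^*) = \mathrm{diag}\bigl(c^*_k \e^{\langle \lambda, \tilde{w}^k\rangle}\bigr)_{k=1}^n$ is a positive diagonal matrix. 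As $c^*>0$ and $\lambda \in \RR^{\tilde{d}}$ vary, $D(\lambda,c^*)$ ranges over all positive diagonal matrices (already at $\lambda=0$). Hence the immersion condition is equivalent to: $V^T D \tilde{V}$ is injective for every positive diagonal $D$.

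Under the hypothesis $d = \tilde{d}$, the matrix $V^T D \tilde{V}$ is square, so injectivity is equivalent to invertibility, and invertibility is preserved under taking transposes. Since $(V^T D \tilde{V})^T = \tilde{V}^T D V$, we conclude that $V^T D \tilde{V}$ is invertible for every positive diagonal $D$ if and only if $\tilde{V}^T D V$ is invertible for every positive diagonal $D$. Applying Theorem \ref{inj} again, but now with the roles of $V$ and $\tilde{V}$ exchanged, the latter condition translates into $\sigma(\im(\tilde{V})^\bot) \cap \sigma(\im(V)) = \{0\}$, and the proof is complete.

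I do not expect a real obstacle here: the only subtlety is verifying that the phrase ``for all $c^*>0$'' in Theorem \ref{inj} really exhausts all positive diagonals $D$ in the Jacobian, which is immediate from looking at the point $\lambda=0$. Everything else is linear algebra (transpose preserves invertibility) plus a clean symmetric re-application of the already-proved Theorem \ref{inj}.
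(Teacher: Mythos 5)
Your proof is correct and follows essentially the same route as the paper: both arguments rest on the observation that the Jacobian of $F$ and the Jacobian of the role-swapped map are transposes of one another (up to a positive diagonal weight), combined with a double application of Theorem~\ref{inj} (immersion $\Leftrightarrow$ sign-vector condition). Your explicit factorization $\DD{F}{\lambda} = V^T D(\lambda,c^*)\,\tilde{V}$ and the remark that $\lambda=0$ already exhausts all positive diagonals is just a cleaner matrix-level phrasing of the inner-product computation the paper carries out.
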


\vspace{1ex}
\begin{proof}
Let $F$ be in the form of Eqn.\ \eqref{Fequ} with $d=\tilde{d}$,
and let $\tilde{F}$ be obtained from $F$ by changing the roles of $V$ and $\tilde{V}$,
\begin{align*}
\tilde{F} \colon & \RR^d \to \RR^d \\
& \lambda \mapsto \tilde{F}(\lambda)
\quad \text{with} \quad
(\tilde{F}(\lambda))_i = \langle c^* \mal \expp{\sum_{j=1}^d \lambda_j v^j} , \tilde{v}^i \rangle \, .
\end{align*}
We will show that $\DD{F}{\lambda}$ is injective for all $c^*$
if and only if
$\DD{\tilde{F}}{\lambda}$ is injective for all $c^*$.
Then, by Theorem \ref{inj} we will obtain the desired result.

Suppose that $\DD{\tilde{F}}{\lambda}$
(or equivalently its transpose)
is not injective (for a certain $c^*$ and a certain $\lambda$),
i.e.\ there exists $\lambda' \in \RR^{\tilde{d}}$
such that $(\DD{\tilde{F}}{\lambda})^T \lambda' = 0$.
Since
\begin{align*}
\sum_{j=1}^d \DD{\tilde{F}_j}{\lambda_i} \, \lambda_j' &=
\sum_{j=1}^d
\langle c^* \mal \expp{\sum_{k=1}^d \lambda_k v^k} \mal v^i , \tilde{v}^j \rangle \lambda_j' =
\langle \underbrace{c^* \mal \expp{\sum_{k=1}^d \lambda_k v^k}}_{c} \mal \, v^i ,
\underbrace{\sum_{j=1}^d \lambda_j' \, \tilde{v}^j}_{\tilde{v}'} \rangle
\end{align*}
and $\langle c \mal v^i , \tilde{v}' \rangle = \langle c \mal \tilde{v}' , v^i \rangle$,
this is equivalent to the non-injectivity condition for $\DD{F}{\lambda}$
derived in the proof of Theorem \ref{inj}.
\end{proof}

% %%%%%%%%%% %%%%%%%%%% %%%%%%%%%% %%%%%%%%%% %%%%%%%%%% %%%%%%%%%% %%%%%%%%%% %%%%%%%%%% %%%%%%%%%%  %%%%%%%%%%

\subsection{Surjectivity of $F$} \label{subsec:surj}

It is more difficult to derive conditions for the surjectivity of $F$.
Our main result is concerned with sufficient conditions,
however, we start with a discussion of necessary conditions.

Let $C$ and $\tilde{C}$ be the polyhedral cones
generated by the vector configurations $V^T=(w^1, \ldots, w^n)$
and $\tilde{V}^T=(\tilde{w}^1,\ldots,\tilde{w}^n)$, respectively.
Then $C^\circ = \inte(C)$, and analogously we write $\tilde{C}^\circ = \inte(\tilde{C})$.
We note that $C^\circ$ and $\tilde{C}^\circ$ are nonempty
since $V$ and $\tilde{V}$ have full rank.

We write $\sigma(\im(V))_\ge = \sigma(\im(V)) \cap \{0,+\}^n$ for the face lattice of $C$,
see the Appendix,
and analogously $\sigma(\im(\tilde{V}))_\ge = \sigma(\im(\tilde{V})) \cap \{0,+\}^n$ for the face lattice of $\tilde{C}$.
A face $f$ of $C$ is characterized by a sign vector $\tau \in \sigma(\im(V))_\ge$
or equivalently by a supporting hyperplane with normal vector $\lambda \in \RR^d$,
where $\tau_k = 0$ whenever $\langle \lambda , w^k \rangle = 0$ (for $w^k$ lying on $f$)
and $\tau_k = +$ whenever $\langle \lambda , w^k \rangle > 0$.

Now, we can study a necessary condition for surjectivity:
The image of $F$ must contain points arbitrarily close to any point on a face of $C$.
We assume that $C$ is pointed,
more specifically that $(+,\ldots,+)^T \in \sigma(\im(V))$,
and we consider the simplest nontrivial face, namely an extreme ray $e$.
To begin with, we assume that $e$ contains only one generator, say $w^1$;
hence the characteristic sign vector amounts to $\tau=(0,+,\ldots,+)^T$.
If $F$ is surjective, then the cone $\tilde{C}$ must have a corresponding extreme ray $\tilde{e}$
with the same sign vector $\tau$.
Only then there is $\mu \in \RR^{\tilde{d}}$
with $\langle \mu , \tilde{w}^1 \rangle = 0$
and $\langle \mu , \tilde{w}^k \rangle > 0$ for $k=2,\ldots,n$
such that the limit
\[
\lim_{a \to \infty} F(-a \mu + \nu)
= \lim_{a \to \infty}
\sum_{k=1}^n c^*_k \, \e^{-a \langle \mu , \tilde{w}^k \rangle+ \langle \nu , \tilde{w}^k \rangle} \, w^k
= c^*_1 \, \e^{\langle \nu , \tilde{w}^1 \rangle} \, w^1
\]
can be placed arbitrarily close to any point on $e$ (by appropriate choice of $\nu \in \RR^{\tilde{d}}$).

If the extreme ray $e$ contains more than one generator,
there may be several corresponding extreme rays $\tilde{e}$.
For a particular $\tilde{e}$ with characteristic sign vector $\tilde{\tau}$,
there is $\mu \in \RR^{\tilde{d}}$
(with $\langle \mu , \tilde{w}^k \rangle = 0$ if $\tilde{\tau}_k = 0$
and $\langle \mu , \tilde{w}^k \rangle > 0$ if $\tilde{\tau}_k = +$)
such that $\lim_{a \to \infty} F(-a \mu + \nu)$ lies on $e$.
We note that if $\tilde{e}$ contains $\tilde{w}^k$,
then $e$ must contain $w^k$;
otherwise the limit does not lie on $e$.
This condition on the corresponding extreme rays $\tilde{e}$ and $e$
can be expressed by their characteristic sign vectors $\tilde{\tau}$ and $\tau$,
namely as $\tilde{\tau} \ge \tau$.
For higher-dimensional faces,
similar (but more complicated) conditions can be formulated.

For the proof of the following surjectivity result, we will employ Degree Theory.
In particular, we use two properties of the Brouwer degree $\bd(f,D,y)$
of a continuous function $f \colon \bar{D} \to \RR^d$
defined on the closure of an open and bounded subset $D \subset \RR^d$ (with boundary $\partial D$)
at a value $y \not\in f(\partial D)$:
(i) the degree is invariant under homotopy,
and (ii) if the degree is nonzero, there exists $x$ such that $y=f(x)$,
see \cite{Lloyd1978} or \cite{FonsecaGangbo1995}.
\begin{theorem} \label{surj}
Let $V$, $\tilde{V}$, and $F$ be as in Definition \ref{F}.
If there exists a lattice isomorphism
$\Phi \colon \sigma(\im(\tilde{V}))_\ge \to \sigma(\im(V))_\ge$
with
$\tilde{\tau} \ge \Phi(\tilde{\tau})$
and $(+,\ldots,+)^T \in \sigma(\im(V))$,
then $F$ is surjective for all $c^* > 0$.
\end{theorem}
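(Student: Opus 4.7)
The plan is to fix an arbitrary $\gamma \in C^\circ$ and to exhibit, via a Brouwer degree computation, some $\lambda \in \RR^d$ with $F(\lambda) = \gamma$ (noting that the existence of the lattice isomorphism forces $d = \tilde{d}$, so dimensions match). The reference object is the Birch-type map $F^*(\lambda) = \sum_{k=1}^n c^*_k\, \e^{\langle \lambda, w^k\rangle}\, w^k$, which by Proposition~\ref{fulton} is a real analytic bijection of $\RR^d$ onto $C^\circ$ and hence has Brouwer degree $\pm 1$ at every $\gamma \in C^\circ$. To connect $F$ to $F^*$ I will linearly interpolate the exponent data, defining the homotopy
\[
H(t,\lambda) = \sum_{k=1}^n c^*_k\, \e^{\langle \lambda, (1-t)\tilde{w}^k + t\, w^k\rangle} \, w^k,
\]
so that $H(0,\cdot) = F$, $H(1,\cdot) = F^*$, and $H(t,\cdot)$ takes values in $C^\circ$ for every $t$.

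The key technical claim is the following properness statement along the homotopy: for the chosen $\gamma$ there should exist $R > 0$ such that $H(t,\lambda) \neq \gamma$ whenever $t \in [0,1]$ and $\lvert \lambda \rvert \ge R$. Granting this, homotopy invariance of the Brouwer degree on the open ball $B_R$ will yield $\bd(F, B_R, \gamma) = \bd(F^*, B_R, \gamma) = \pm 1 \neq 0$, and the existence property of the degree then provides some $\lambda \in B_R$ with $F(\lambda) = \gamma$, establishing surjectivity.

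The claim will be proved by contradiction: suppose sequences $(t_n,\lambda_n)$ with $H(t_n,\lambda_n) = \gamma$ and $\lvert \lambda_n\rvert \to \infty$ exist; after passing to subsequences $t_n \to t^*$ and $\mu_n := \lambda_n/\lvert\lambda_n\rvert \to \mu$ on the unit sphere. Set $a_k = \langle \mu,(1-t^*)\tilde{w}^k + t^* w^k\rangle$. Since $C$ is pointed (by the assumption $(+,\ldots,+)^T \in \sigma(\im V)$) and a bounded sum $\sum_k \alpha_k\, w^k$ with $\alpha_k \ge 0$ forces each $\alpha_k$ bounded, the convergence $H(t_n,\lambda_n) \to \gamma$ forces $a_k \le 0$ for all $k$. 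Writing $K = \{k : a_k = 0\}$, the surviving contributions place the limit in $\mathrm{cone}\{w^k : k \in K\} \subseteq C$. Since proper faces of $C$ do not meet $C^\circ$, it then suffices to show that $\{w^k : k \in K\}$ is contained in the generator set of some proper face of $C$.

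The hard part will be verifying this face containment uniformly in $t^* \in [0,1]$. At $t^* = 1$ the set $K$ indexes exactly the generators of $C$ on the supporting hyperplane with outward normal $-\mu$, and the conclusion is immediate. At $t^* = 0$, $K$ indexes the generators of $\tilde{C}$ on a proper face $\tilde{f}$ with characteristic sign vector $\tilde{\tau}$ having $\tilde{\tau}_k = 0$ exactly on $K$; the hypothesis $\tilde{\tau} \ge \Phi(\tilde{\tau})$ then shows that the proper face $\Phi(\tilde{f})$ of $C$ (proper because $\Phi$ is a lattice isomorphism) has generator set containing $\{w^k : k \in K\}$. The delicate case is $t^* \in (0,1)$, where $a_k = 0$ is a mixed linear relation between $\langle \mu,\tilde{w}^k\rangle$ and $\langle \mu, w^k\rangle$ and $K$ need not arise as the vertex set of any face of either $\tilde{C}$ or $C$; worse, the interpolated vector $(1-t^*)\tilde{w}^k + t^* w^k$ may even vanish, causing $H(t^*,\cdot)$ to degenerate. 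Resolving this case is the crux of the proof, and will likely require either replacing the straight-line interpolation by a more clever path from $\tilde{V}$ to $V$ in the space of matrices with the same oriented matroid (so that the lattice isomorphism $\Phi$ extends at every intermediate $t$), or reducing the intermediate case combinatorially to the two endpoint analyses via a chain of subfaces bridging $\tilde{C}$ and $C$.
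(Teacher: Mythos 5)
Your overall strategy (degree theory plus a homotopy to the Birch reference map $F^*$) is sound in spirit, but the proof as written has a genuine gap, and it is exactly the one you flag yourself: the properness of the straight-line exponent homotopy at intermediate times $t^*\in(0,1)$. Your contradiction argument uses the hypothesis $\tilde{\tau}\ge\Phi(\tilde{\tau})$ only at the endpoints $t^*=0$ and $t^*=1$; at intermediate $t^*$ the zero set $K=\{k : \langle\mu,(1-t^*)\tilde{w}^k+t^*w^k\rangle=0\}$ is governed by the interpolated configuration $(1-t^*)\tilde{V}+t^*V$, about whose combinatorics the hypotheses say nothing. Concretely, the interpolated matrix can drop rank, an interpolated generator $(1-t^*)\tilde{w}^k+t^*w^k$ can vanish (so that $k\in K$ automatically even though $w^k$ points into the interior of $C$), and in general $K$ need not index the generators of any face of $C$, so the conclusion $\gamma\in\mathrm{cone}\{w^k:k\in K\}\subseteq\partial C$ simply fails. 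Your fallback of choosing ``a more clever path in the space of matrices with the same oriented matroid'' is also not available in general: the theorem does not even assume $\sigma(\im(V))=\sigma(\im(\tilde{V}))$ (only a face-lattice isomorphism on the nonnegative sign vectors with $\tilde{\tau}\ge\Phi(\tilde{\tau})$), and even when the oriented matroids agree, their realization spaces need not be connected, so no such path need exist. Hence the crux of the theorem remains unproved in your proposal.

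The paper circumvents precisely this obstacle by never deforming the exponent data. It transfers $F$ to a map $G^\circ=F\circ\tilde{F}^{-1}\colon\tilde{C}^\circ\to C^\circ$, where $\tilde{F}$ is the genuine Birch map built from the extreme-ray generators of $\tilde{C}$ (an isomorphism by Proposition~\ref{fulton}), shows via the hypothesis $\tilde{\tau}\ge\Phi(\tilde{\tau})$ that $G^\circ$ extends continuously to the boundary and maps faces of $\tilde{C}$ into the corresponding faces of $C$, cuts both pointed cones to polytopes $\tilde{P}$ and $P$, and then homotopes $G$ \emph{in the target}, $H(x,t)=tG(x)+(1-t)G'(x)$, to a piecewise linear homeomorphism $G'\colon\tilde{P}\to P$ supplied by the combinatorial equivalence. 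Since both $G$ and $G'$ map side-faces into side-faces and the cut-face either to the cut-face or outside $P$, the boundary $\partial\tilde{P}$ never hits an interior target point along the homotopy, and the degree argument closes; letting the cut recede to infinity gives surjectivity of $F$. If you want to complete your argument you would need either to adopt this target-side homotopy or to supply a substitute properness estimate valid uniformly in $t$, which the straight-line interpolation of $\tilde{V}$ to $V$ does not provide.
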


\vspace{1ex}
\begin{proof}
In order to use the Brouwer degree,
we require a map on a closed and bounded set.
To this end, we define a map $G$ equivalent to $F$
from the interior of $\tilde{C}$ to the interior of $C$
and extend $G$ to the boundaries such that it maps faces to faces.
Then, we cut the pointed cones such that we obtain polytopes $\tilde{P}$ and $P$.
Finally, we define a homotopy between the map $G$
and a homeomorphism between the polytopes guaranteed by the face lattice isomorphism.
As a consequence,
every point in the interior of $P$ has nonzero Brouwer degree and hence is in the image of $G$.
Since the cut of the cone $C$ can be placed at arbitrary distance from the origin,
this holds for every point in the interior of $C$.

Since $(+,\ldots,+)^T \in \sigma(\im(V))$,
the face lattice isomorphism implies $(+,\ldots,+)^T \in \sigma(\im(\tilde{V}))$,
and hence the cones $C$ and $\tilde{C}$ are pointed.
We start by choosing a minimal set of generators for $\tilde{C}$,
which (after reordering) we assume to be $(\tilde{w}^1,\ldots,\tilde{w}^{n_E})$,
where $n_E$ is the number of extreme rays of $\tilde{C}$.
We define an auxiliary map,
\begin{align*}
\tilde{F} \colon & \RR^{\tilde{d}} \to \tilde{C}^\circ \\
                 & \lambda \mapsto \sum_{k=1}^{n_E} \tilde{c}^*_k \,
                 \e^{\langle \lambda , \tilde{w}^k \rangle} \, \tilde{w}^k \, ,
\end{align*}
which is a real analytic isomorphism by Proposition \ref{fulton},
and a composed map,
\begin{align*}
G^\circ \colon & \tilde{C}^\circ \to C^\circ \\
               & x \mapsto F(\tilde{F}^{-1}(x)) \, ,
\end{align*}
which is surjective whenever $F$ is surjective.

Since $G^\circ$ is defined only on $\tilde{C}^\circ$,
we want to extend it continuously to the boundary $\partial \tilde{C}$,
i.e.\ to the faces of the cone.
Let $\tilde{f}$ be a face of $\tilde{C}$.
It contains a subset of the minimal set of generators for $\tilde{C}$,
which (after reordering) we assume to be $(\tilde{w}^1,\ldots,\tilde{w}^{n_{\min}})$.
There may be additional generators on $\tilde{f}$,
which we assume to be $(\tilde{w}^{n_E+1},\ldots,\tilde{w}^{n_E+n_{\add}})$,
where $n_{\min} + n_{\add}$
is the total number of generators on $\tilde{f}$.

Now,
let $(x^i)_{i \in \NN}$ be a sequence with $x^i \in \tilde{C}^\circ$ and $\lim_{i \to \infty} x^i \in \tilde{f}$.
Via the isomorphism $\tilde{F}$,
there is a corresponding sequence $(\lambda^i)_{i \in \NN}$ with $\lambda^i \in \RR^{\tilde{d}}$.
From
\begin{equation*}
\lim_{i \to \infty} x^i = \sum_{k = 1}^{n_{\min}} \tilde{c}^*_k
                          \lim_{i \to \infty} \e^{\langle \lambda^i , \tilde{w}^k \rangle} \, \tilde{w}^k +
                          \sum_{k = n_{\min}+1}^{n_E} \tilde{c}^*_k
                          \lim_{i \to \infty} \e^{\langle \lambda^i , \tilde{w}^k \rangle} \, \tilde{w}^k ,
\end{equation*}
we conclude that
$\lim_{i \to \infty} \e^{\langle \lambda^i , \tilde{w}^k \rangle} \ge 0$
for $k = 1,\ldots,n_{\min}$
and
$\lim_{i \to \infty} \e^{\langle \lambda^i , \tilde{w}^k \rangle} = 0$
for $k = n_{\min}+1,\ldots,n_E$.
Additional generators $\tilde{w}^k$ on $\tilde{f}$ can be written as nonnegative linear combinations
of the minimal generators $(\tilde{w}^1,\ldots,\tilde{w}^{n_{\min}})$
and hence%
\footnote{
By using $\e^{\langle \lambda^i , \sum_{k=1}^{n_{\min}} a_k \tilde{w}^k \rangle}
= \prod_{k=1}^{n_{\min}} \left( \e^{\langle \lambda^i , \tilde{w}^k \rangle} \right)^{a_k}$.
}
we obtain $\lim_{i \to \infty} \e^{\langle \lambda^i , \tilde{w}^k \rangle} \ge 0$.
Generators $\tilde{w}^k$ not on $\tilde{f}$ can be written as nonnegative linear combinations
containing at least one of the remaining minimal generators $(\tilde{w}^{n_{\min}+1},\ldots,\tilde{w}^{n_E})$
and hence%
\footnote{
By using $\e^{\langle \lambda^i , \sum_{k=1}^{n_E} a_k \tilde{w}^k \rangle}
= \prod_{k=1}^{n_{\min}}     \left( \e^{\langle \lambda^i , \tilde{w}^k \rangle} \right)^{a_k}
  \prod_{k=n_{\min}+1}^{n_E} \left( \e^{\langle \lambda^i , \tilde{w}^k \rangle} \right)^{a_k}$.
}
we obtain $\lim_{i \to \infty} \e^{\langle \lambda^i , \tilde{w}^k \rangle} = 0$.
As a consequence, the image of the sequence converges and
\begin{equation*}
\lim_{i \to \infty} G^\circ(x^i) = \sum_{k = 1}^{n_{\min}} c^*_k
                                   \lim_{i \to \infty} \e^{\langle \lambda^i , \tilde{w}^k \rangle} \, w^k +
                                   \sum_{k = n_E + 1}^{n_E + n_{\add}} c^*_k
                                   \lim_{i \to \infty} \e^{\langle \lambda^i , \tilde{w}^k \rangle} \, w^k \, .
\end{equation*}
The isomorphism $\Phi$ (between the face lattices of $\tilde{C}$ and $C$) with $\tilde{\tau} \ge \Phi(\tilde{\tau})$
implies that there is a face $f$ of $C$ with $w^k \in f$ if $\tilde{w}^k \in \tilde{f}$.
That is, $w^1,\ldots,w^{n_{\min}} \in f$ as well as $w^{n_E + 1},\ldots,w^{n_E + n_{\add}} \in f$
and hence $\lim_{i \to \infty} G^\circ(x^i) \in f$.

In other words,
there is a continuous extension of $G^\circ$ to the face $\tilde{f}$,
which maps $\tilde{f}$ to the corresponding face $f$.
We set
$G := G^\circ$ on $\tilde{C}^\circ$ and
$G(x) := \lim_{i \to \infty} G^\circ(x^i)$
for any sequence $(x^i)_{i \in \NN}$ with $x^i \in \tilde{C}^\circ$ and $\lim_{i \to \infty} x^i = x \in \tilde{f}$.
Since this can be done for all faces of $\tilde{C}$,
there is a map $G \colon \tilde{C} \to C$ which extends $G^\circ$ continuously to $\partial \tilde{C}$
and maps faces to faces.

Due to the face lattice isomorphism,
a minimal set of generators for $C$ is given by $(w^1,\ldots,w^{n_E})$.
The isomorphism further implies $d=\tilde{d}$.

Since $C$ is a pointed cone,
we can choose a $(d-1)$-dimensional subspace of $\RR^d$ such that $C$ lies on one side of the subspace.
We cut $C$ with a hyperplane parallel to the subspace
and obtain a polytope $P$ (lying on one side of the hyperplane).
In particular, we intersect the extreme rays of $C$ with the hyperplane:
the intersection of the extreme ray $e^k$ (generated by $w^k$) is located at $\alpha_k w^k$ with $\alpha_k > 0$.

Analogously,
we cut $\tilde{C}$ with a hyperplane
and obtain a polytope $\tilde{P}$.
The intersection of the extreme ray $\tilde{e}^k$ (generated by $\tilde{w}^k$)
is located at $\tilde{\alpha}_k \tilde{w}^k$ with $\tilde{\alpha}_k > 0$.

From now on, we restrict the map $G$ to $\tilde{P}$
and choose $\tilde{c}^*$ such that $G$ maps corners of $\tilde{P}$ to corresponding corners of $P$.
For example, the corner $\tilde{\alpha}_1 \tilde{w}^1$ on $\tilde{e}^1$
corresponds (by $\tilde{F}^{-1}$) to
the sequence $(\lambda^i)_{i \in \NN}$ with $\lambda^i \in \RR^n$,
$\lim_{i \to \infty} \tilde{c}^*_1 \, \e^{\langle \lambda^i , \tilde{w}^1 \rangle} = \tilde{\alpha}_1$,
and $\lim_{i \to \infty} \e^{\langle \lambda^i , \tilde{w}^k \rangle} = 0$ for $k = 2, \ldots, n_E$.
In turn, $(\lambda^i)_{i \in \NN}$ corresponds (by $F$) to the corner $\alpha_1 w^1$ on $e^1$:
\[
\lim_{i \to \infty} \left( c^*_1 \, \e^{\langle \lambda^i , \tilde{w}^1 \rangle} \, w^1
+ \sum_{k=n_E+1}^{n_E+n_{\add}} c^*_k \, \e^{\langle \lambda^i , \tilde{w}^k \rangle} \, w^k \right) = \alpha_1 w^1 \, .
\]
Here, we have assumed that in addition to $\tilde{w}^1$ there are additional generators $\tilde{w}^k$
(with $k=n_E+1,\ldots,n_E+n_{\add}$) on $\tilde{e}^1$ with corresponding generators $w^k$ on $e^1$.
If we write $\tilde{w}^k = \tilde{\beta}_k \tilde{w}^1$, $w^k = \beta_k w^1$, and
$x = \lim_{i \to \infty} \e^{\langle \lambda^i , \tilde{w}^1 \rangle}$,
we can determine $\tilde{c}^*_1$ from
\[
\tilde{c}^*_1 \, x = \tilde{\alpha}_1
\quad \text{with} \quad
c^*_1 \, x + \sum_{k=n_E+1}^{n_E+n_{\add}} c^*_k \, x^{\tilde{\beta}_k} \beta_k = \alpha_1 \, .
\]
If we choose $\tilde{c}^*_k$ accordingly for each extreme ray $\tilde{e}_k$,
then $G$ maps ``side-edges'' of $\tilde{P}$ to corresponding side-edges of $P$.
The image of other faces of $\tilde{P}$
need not coincide with the corresponding faces of $P$.
(However, due to the face lattice isomorphism,
the image of a ``side-face'' of $\tilde{P}$\, lying on a face of $\tilde{C}$,
lies in the corresponding face of $C$.)
In particular%
\footnote{
A point on the cut-face of $\tilde{P}$ is a convex combination
of the ``corners'' $\tilde{\alpha}_k \tilde{w}^k$, $k=1,\ldots,n_E$.
By $\tilde{F}^{-1}$ it corresponds to some $\lambda \in \RR^n$,
which by $F$ corresponds to a point on the cut-face of $P$,
that is, a convex combination (with the same coefficients) of the corners $\alpha_k w^k$, $k=1,\ldots,n_E$,
{\em plus} a positive linear combination of the additional generators $w^k$, $k=n_E+1,\ldots,n$.
},
the image of the ``cut-face'' of $\tilde{P}$ (arising from the cut with the hyperplane)
may lie outside the cut-face of $P$.

The isomorphism between the face lattices of $\tilde{C}$ and $C$
has another important consequence.
It guarantees the existence of a piecewise linear homeomorphism $G' \colon \tilde{P} \to P$,
which restricts to homeomorphisms between corresponding faces of $\tilde{P}$ and $P$, see the Appendix.
We note that $G'$ has nonzero Brouwer degree on $P^\circ=\inte(P)$
and define a homotopy between $G$ (restricted to $\tilde{P}$) and $G'$,
\begin{align*}
H \colon & \tilde{P} \times [0,1] \to C \subset \RR^d \\
         & (x, t) \mapsto t \, G(x) + (1-t) \, G'(x) \, .
\end{align*}
(The homotopy $H$ maps to $C$, since both $G$ and $G'$ map to $C$ and $C$ is convex.)

Now, let $y \in P^\circ$.
Below we will show that $y \not\in H(\partial \tilde{P},t)$ for all $t \in [0,1]$.
Writing $\tilde{P}^\circ=\inte(\tilde{P})$,
we conclude that $\bd(G,\tilde{P}^\circ,y) = \bd(G',\tilde{P}^\circ,y) \not= 0$
(by the homotopy invariance of the Brouwer degree)
and that there exists $x \in \tilde{P}^\circ$ with $G(x)=y$ (by the existence property of the Brouwer degree).
In other words, the image of $G$ restricted to $\tilde{P}^\circ$ contains $P^\circ$.
Since the cut of the cone $C$ can be placed at arbitrary distance from the origin,
$G^\circ \colon \tilde{C}^\circ \to C^\circ$ and hence $F \colon \RR^d \to C^\circ$ are surjective.

It remains to show that $y \not\in H(\partial \tilde{P},t)$ for all $t \in [0,1]$:
For side-faces $\tilde{f} \subset \partial \tilde{P}$,
one has $H(\tilde{f},t) \subset \partial C$ for all $t \in [0,1]$
(since $G$ and $G'$ map side-faces to side-faces),
whereas for the cut-face $\tilde{f} \subset \partial \tilde{P}$,
one either has $H(\tilde{f},t) \subset \partial P$ for all $t \in [0,1]$
(whenever $G$ maps one cut-face to the other)
or $H(\inte(\tilde{f}),t) \cap P = \emptyset$ for all $t \in [0,1]$
(whenever $G$ maps the cut-face out of $P$).
In each case, one obtains $H(\partial \tilde{P},t) \cap P^\circ = \emptyset$ for all $t \in [0,1]$.
\end{proof}

We think that the technical condition $(+,\ldots,+)^T \in \sigma(\im(V))$ in Theorem \ref{surj},
which requires the cone $C$ to be pointed,
is not necessary, and a similar result can be obtained for arbitrary cones.
However, at the moment we do not have a complete proof for such a theorem.

\subsection{Main results}

The previous two theorems concerned with injectivity and surjectivity of $F$
allow the following generalization of Proposition \ref{fulton} (Birch's Theorem).
\begin{proposition} \label{genfulton}
Let $V$, $\tilde{V}$, and $F$ be as in Definition \ref{F}.
If $\sigma(\im(V))=\sigma(\im(\tilde{V}))$
and $(+,\ldots,+)^T \in \sigma(\im(V))$,
then $F$ is a real analytic isomorphism of $\RR^d$ onto $C^\circ$ for all $c^* > 0$.
\end{proposition}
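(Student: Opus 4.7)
The plan is to deduce the proposition as a packaging of Theorem \ref{inj} and Theorem \ref{surj}, since the two hypotheses $\sigma(\im(V)) = \sigma(\im(\tilde{V}))$ and $(+,\ldots,+)^T \in \sigma(\im(V))$ translate almost directly into the hypotheses of those theorems. First I would note a dimensional compatibility: sign vectors of a subspace form its oriented matroid and the rank of an oriented matroid is intrinsic, so the hypothesis $\sigma(\im(V)) = \sigma(\im(\tilde V))$ forces $d = \tilde d$. Consequently $F \colon \RR^d \to C^\circ \subseteq \RR^d$ is a self-map between spaces of equal dimension, which is the setting where one can hope for a diffeomorphism.

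For injectivity, I would invoke Theorem \ref{inj} and verify its sign-vector condition. Using the hypothesis $\sigma(\im(V)) = \sigma(\im(\tilde V))$ together with the orthogonality identity \eqref{eq:orthsubspace} from the Appendix, one has
\[
\sigma(\im(V)^\bot) \cap \sigma(\im(\tilde V)) = \sigma(\im(V))^\bot \cap \sigma(\im(V)) = \{0\},
\]
so Theorem \ref{inj} yields both that $F$ is injective for all $c^* > 0$ and that $\partial F / \partial \lambda$ is everywhere injective, i.e.\ $F$ is a real analytic immersion.

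For surjectivity, I would apply Theorem \ref{surj}. Since $\sigma(\im(V)) = \sigma(\im(\tilde V))$, the face lattices $\sigma(\im(\tilde V))_\ge$ and $\sigma(\im(V))_\ge$ coincide, so the identity map $\Phi = \mathrm{id}$ is a lattice isomorphism and the required inequality $\tilde\tau \ge \Phi(\tilde\tau)$ holds trivially with equality. The pointedness condition $(+,\ldots,+)^T \in \sigma(\im(V))$ is the second standing hypothesis. Hence $F$ maps onto $C^\circ$ for every $c^* > 0$.

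It then remains to upgrade the bijection $F$ to a real analytic isomorphism. The map $F$ is real analytic by inspection of its defining formula, and by the previous step its differential is everywhere injective; since $d = \tilde d$, $\partial F / \partial \lambda$ is in fact invertible at every point. The real analytic inverse function theorem gives local real analytic inverses which, by injectivity and surjectivity, patch into a global real analytic inverse $F^{-1} \colon C^\circ \to \RR^d$. I do not anticipate a real obstacle here; the only subtlety is the oriented-matroid observation that equal sign vectors force equal dimension, after which the statement is essentially bookkeeping on top of Theorems \ref{inj} and \ref{surj}.
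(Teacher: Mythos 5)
Your proposal is correct and follows essentially the same route as the paper's proof: deduce $d=\tilde d$ and $\sigma(\im(V)^\bot)\cap\sigma(\im(\tilde V))=\{0\}$ from the hypothesis via Eqn.~\eqref{eq:orthsubspace}, apply Theorem~\ref{inj} for injectivity and the local isomorphism property, and apply Theorem~\ref{surj} with $\Phi=\mathrm{id}$ for surjectivity. The only difference is that you spell out the final inverse-function-theorem bookkeeping that the paper leaves implicit, which is fine.
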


\begin{proof}
From $\sigma(\im(V))=\sigma(\im(\tilde{V}))$ it follows that $d=\tilde{d}$
and with Eqn.~\eqref{eq:orthsubspace}
that $\sigma(\im(V)^\bot) \cap \sigma(\im(\tilde{V})) = \{ 0 \}$.
Hence, $F$ is injective and a local isomorphism by Theorem \ref{inj}.
Moreover, with $\Phi$ being the identity,
$F$ is surjective by Theorem \ref{surj}.
\end{proof}

Note that the condition $\sigma(\im(V))=\sigma(\im(\tilde{V}))$ in the previous proposition can be tested algorithmically using chirotopes,
see the Appendix.
We can now formulate a result analogous to Theorem \ref{thm}
in the case of generalized mass action kinetics.
\begin{theorem} \label{genthm}
Let $(\mathscr{S},\mathscr{C},\tilde{\mathscr{C}},\mathscr{R},k)$ be a generalized mass action system with
nonempty set $\tilde{Z}$ of complex balancing equilibria,
stoi\-chio\-metric subspace $S$,
and \kinsub subspace $\tilde{S}$.
If $\sigma(S) = \sigma(\tilde{S})$
and $(+,\ldots,+)^T \in \sigma(S^\bot)$,
then $\tilde{Z}$ meets every stoichiometric compatibility class in exactly one point.
\end{theorem}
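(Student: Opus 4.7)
The plan is to reduce the theorem directly to Proposition \ref{genfulton} via the reformulation of the problem carried out in Section \ref{sec:F}. Since $\tilde{Z}$ is assumed nonempty, I would fix $c^* \in \tilde{Z}$ and invoke Proposition \ref{proZ'} to represent $\tilde{Z} = \{ c^* \mal \e^{\tilde{v}} \st \tilde{v} \in \tilde{S}^\bot \}$. Choosing bases $V = (v^1,\ldots,v^d)$ of $S^\bot$ and $\tilde{V} = (\tilde{v}^1,\ldots,\tilde{v}^{\tilde{d}})$ of $\tilde{S}^\bot$, I would then consider the map $F \colon \RR^{\tilde{d}} \to C^\circ \subseteq \RR^d$ of Definition \ref{F}.

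Next, I would translate the theorem into a bijectivity statement for $F$. The derivation preceding Eqn.~\eqref{Fequ} shows that, for any $c' \in \RR^\mathscr{S}_>$, the intersection $(c'+S)_\ge \cap \tilde{Z}$ is in bijection with the set of $\lambda \in \RR^{\tilde{d}}$ satisfying $F(\lambda) = \gamma(c')$, where $\gamma(c')_i = \langle c', v^i \rangle$. Moreover, $c'+S = c''+S$ is equivalent to $\langle c'-c'', v^i \rangle = 0$ for all $i$, that is, to $\gamma(c') = \gamma(c'')$; and by Definition \ref{F} the map $c' \mapsto \gamma(c')$ surjects $\RR^\mathscr{S}_>$ onto $C^\circ$. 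Hence the stoichiometric compatibility classes correspond bijectively to points of $C^\circ$, and the conclusion of the theorem is equivalent to ``$F$ is a bijection from $\RR^{\tilde{d}}$ onto $C^\circ$''.

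Finally, I would verify the hypotheses of Proposition \ref{genfulton}. From the assumption $\sigma(S) = \sigma(\tilde{S})$ together with Eqn.~\eqref{eq:orthsubspace} one obtains
\[
\sigma(\im(V)) = \sigma(S^\bot) = \sigma(S)^\bot = \sigma(\tilde{S})^\bot = \sigma(\tilde{S}^\bot) = \sigma(\im(\tilde{V})),
\]
so in particular $d = \tilde{d}$, while $(+,\ldots,+)^T \in \sigma(S^\bot) = \sigma(\im(V))$ is precisely the remaining assumption of the theorem. Proposition \ref{genfulton} then asserts that $F$ is a real analytic isomorphism from $\RR^{\tilde{d}}$ onto $C^\circ$, and combined with the previous correspondence this proves the theorem.

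I do not expect a serious obstacle here: the genuine work has already been done in Theorems \ref{inj} and \ref{surj} (and hence in Proposition \ref{genfulton}), and the present statement is essentially a dictionary translating the extrinsic setting of $F$ and $C^\circ$ back into the intrinsic language of stoichiometric compatibility classes and complex balancing equilibria. The only two points requiring brief care are the identification of stoichiometric compatibility classes with points of $C^\circ$ via $c' \mapsto \gamma(c')$, and the correct passage of the sign-vector identity through the orthogonal-complement rule \eqref{eq:orthsubspace}.
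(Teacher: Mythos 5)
Your proposal is correct and follows essentially the same route as the paper's own proof: reduce the statement to bijectivity of the map $F$ from Definition \ref{F} (using Proposition \ref{proZ'} and the correspondence set up at the start of Subsection \ref{sec:F}), translate $\sigma(S)=\sigma(\tilde{S})$ into $\sigma(\im(V))=\sigma(\im(\tilde{V}))$ via Eqn.~\eqref{eq:orthsubspace}, and invoke Proposition \ref{genfulton}. You merely spell out more explicitly the dictionary between stoichiometric compatibility classes and points of $C^\circ$, which the paper delegates to the discussion in Subsection \ref{sec:F}.
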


\begin{proof}
Suppose $\tilde{Z} \neq \emptyset$.
As discussed at the beginning of Subsection \ref{sec:F},
uniqueness and existence of a complex balancing equilibrium in every stoichiometric compatibility class
correspond to injectivity and surjectivity of the map $F$ as given in Definition \ref{F},
where $V$ and $\tilde{V}$ are bases for $S^\bot$ and $\tilde{S}^\bot$, respectively.
By Eqn.~\eqref{eq:orthsubspace}, $\sigma(S) = \sigma(\tilde{S})$
is equivalent to $\sigma(\im(V))=\sigma(\im(\tilde{V}))$,
and obviously $(+,\ldots,+)^T \in \sigma(S^\bot)$
is equivalent to $(+,\ldots,+)^T \in \sigma(\im(V))$
such that $F$ is injective and surjective by Proposition \ref{genfulton}.
\end{proof}

In the terminology of CRNT, 
a chemical reaction network is {\em conservative} if $S^\bot \cap \RR^\mathscr{S}_> \neq \emptyset$,
i.e.\ if there is a ``vector of molecular weights'',
relative to which all reactions are mass conserving.
Note that the condition $(+,\ldots,+)^T \in \sigma(S^\bot)$ in Theorem \ref{genthm}
means that the underlying chemical reaction network is conservative. 

% %%%%%%%%%% %%%%%%%%%% %%%%%%%%%% %%%%%%%%%% %%%%%%%%%% %%%%%%%%%% %%%%%%%%%% %%%%%%%%%% %%%%%%%%%%  %%%%%%%%%%

\section{Examples} \label{sec:exa}

We discuss two examples of generalized mass action systems.
First, we continue the example of the generalized chemical reaction network
introduced in Section~\ref{sec:gmas},
\begin{equation}
\begin{array}{ccc}
A + B & \rightleftharpoons & C \\
\vdots && \vdots \\
a A + b B && c \hspace{1pt} C
\end{array}
\end{equation}
with $a,b,c \in \RR_>$.
The \kin complexes $a A + b B$ and $c \hspace{1pt} C$
(associated with the complexes $A + B$ and $C$)
determine the exponents in the rate functions
$k_{A + B \to C} [A]^a [B]^b$ and $k_{C \to A + B} [C]^c$.

The network is (weakly) reversible
and has 2 complexes and 1 linkage class.
The stoichiometric and \kinsub subspace amount to
$S = \spann \{(-1,-1,1)^T\}$ and $\tilde{S} = \spann \{(-\nu_A,-\nu_B,\nu_C)^T\}$
with dimensions $d=\tilde{d}=1$.
By Proposition \ref{prodef'}, $\delta=\tilde{\delta}=2-1-1=0$,
and by Proposition \ref{prodefzero'}, $\tilde{Z} \neq \emptyset$.
Further, the sign vectors of $S$ and $\tilde{S}$ coincide,
i.e.\ $\sigma(S) = \sigma(\tilde{S})$,
and $(1,1,2)^T \in S^\bot$,
which implies
$(+,+,+)^T \in \sigma(S^\bot)$.
Hence, by Theorem \ref{genthm},
every stoichiometric compatibility class
contains exactly one complex balancing equilibrium.

In the rest of this section,
we study an autocatalytic mechanism (for the overall reaction $A + B \rightleftharpoons C$)
endowed with generalized mass action kinetics:
\begin{equation} \label{ex2}
\begin{array}{ccc}
A + 2 B & \rightleftharpoons & B + C \\
\vdots && \vdots \\
A + B && 2 B + C
\end{array}
\end{equation}

The \kin complexes $A + B$ and $2 B + C$
(associated with the complexes $A + 2 B$ and $B + C$)
determine the rate functions
$k_{A + 2 B \to B + C} [A] [B]$ and $k_{B + C \to A + 2 B} [B]^2 [C]$.
The particular kinetics may be unrealistic from a chemical point of view,
however, it will serve to demonstrate how the conditions in Theorem \ref{genthm}
for existence and uniqueness
of a complex balancing equilibrium (in every stoichiometric compatibility class) are violated.

The network is weakly reversible, $\delta=\tilde{\delta}=0$, and hence $\tilde{Z} \neq \emptyset$.
In particular, the stoichiometric and \kinsub subspace amount to
$S = \spann \{(-1,-1,1)^T\}$ and $\tilde{S} = \spann \{(-1,1,1)^T\}$.
For the orthogonal complements $S^\bot$ and $\tilde{S}^\bot$ we choose the bases
\begin{equation*}
V =
\begin{pmatrix}
1 & 0 \\
0 & 1 \\
1 & 1
\end{pmatrix}
\quad \text{and} \quad
\tilde{V} =
\begin{pmatrix}
1 & 1 \\
0 & 1 \\
1 & 0
\end{pmatrix} \, .
\end{equation*}

The cones $C$ and $\tilde{C}$
generated by $V^T=(w^A,w^B,w^C)$
and $\tilde{V}^T=(\tilde{w}^A,\tilde{w}^B,\tilde{w}^C)$
both coincide with $\RR^2_\ge$:

\setlength{\unitlength}{0.25\textwidth}
\hfill
\begin{picture}(1.4, 1.4)(-.2,-.2)
\put(0, 0){\line(0, 1){1.2}}
\put(0, 0){\line(1, 0){1.2}}
\put(1, 0){\circle*{.05}} \put(1.05, 0.05){$w^A$}
\put(0, 1){\circle*{.05}} \put(0.05, 1.05){$w^B$}
\put(1, 1){\circle*{.05}} \put(1.05, 1.05){$w^C$}
\put(0, 0){\line(0, -1){0.05}} \put(1, 0){\line(0, -1){0.05}}
\put(0, 0){\line(-1, 0){0.05}} \put(0, 1){\line(-1, 0){0.05}}
\put(-0.025, -0.15){0}  \put(0.975, -0.15){1}
\put(-0.125, -0.025){0} \put(-0.125, 0.975){1}
\end{picture}
\hfill
\begin{picture}(1.4, 1.4)(-.2,-.2)
\put(0, 0){\line(0, 1){1.2}}
\put(0, 0){\line(1, 0){1.2}}
\put(1, 0){\circle*{.05}} \put(1.05, 0.05){$\tilde{w}^C$}
\put(0, 1){\circle*{.05}} \put(0.05, 1.05){$\tilde{w}^B$}
\put(1, 1){\circle*{.05}} \put(1.05, 1.05){$\tilde{w}^A$}
\put(0, 0){\line(0, -1){0.05}} \put(1, 0){\line(0, -1){0.05}}
\put(0, 0){\line(-1, 0){0.05}} \put(0, 1){\line(-1, 0){0.05}}
\put(-0.025, -0.15){0}  \put(0.975, -0.15){1}
\put(-0.125, -0.025){0} \put(-0.125, 0.975){1}
\end{picture}
\hfill
\phantom{.}

First, we address the question of existence.
We observe that the cone $C$ has an extreme ray generated by $w^A$,
whereas the cone $\tilde{C}$ does not have a corresponding extreme ray generated by $\tilde{w}^A$
(since $\tilde{w}^A$ lies in the interior of $\tilde{C}$).
As a consequence, the map $F$ is not surjective for all $c^*$,
cf.\ the argument at the beginning of Subsection\ \ref{subsec:surj}.
In other words,
there may be a stoichiometric compatibility class
that does not contain a complex balancing equilibrium.

Now, we turn to the question of uniqueness.
In order to employ Proposition \ref{unique} or \ref{notunique},
we determine $\sigma(S) \cap \sigma(\tilde{S}^\bot)$.
The sign vectors of $S$ are $(-,-,+)^T$, its inverse, and $0$,
whereas the sign vectors of $\tilde{S}^\bot$ can be read off from the above figure:
For every hyperplane of $\RR^2$, i.e.\ for every line through $0\in\RR^2$,
we check if $\tilde{w}^A$, $\tilde{w}^B$, and $\tilde{w}^C$
lie on the line or on its negative or positive side.
We obtain
\begin{equation*}
\sigma(S) =
\begin{pmatrix}
- & & 0 \\
- & \ldots & 0 \\
+ & & 0
\end{pmatrix}
\quad \text{and} \quad
\sigma(\tilde{S}^\bot) =
\begin{pmatrix}
+ & + & + & 0 & - & - & & 0 \\
+ & 0 & - & - & - & - & \ldots & 0\\
+ & + & + & + & + & 0 & & 0
\end{pmatrix} \, ,
\end{equation*}
where we use matrix notation for sets of vectors
and where we do not state vectors explicitly that are inverses of others.
We find that $\sigma(S) \cap \sigma(\tilde{S}^\bot)$ contains $(-,-,+)^T$.
Hence,
by Proposition~\ref{notunique},
there exist rate constants $k_{A + 2 B \to B + C}$ and $k_{B + C \to A + 2 B}$
such that some stoichiometric compatibility class
contains more than one complex balancing equilibrium.

Due to the simplicity of the generalized mass action system,
the equilibria of the associated ODE can be determined analytically.
The equilibrium condition amounts to
\begin{equation*}
k_{A + 2 B \to B + C} [A] [B] = k_{B + C \to A + 2 B} [B]^2 [C] \, ,
\end{equation*}
and since $\delta=0$ all equilibria are complex balancing.
By using the conservation relations $[A]+[C]=[A]_0+[C]_0=\Sigma_{AC}$ and $[B]+[C]=[B]_0+[C]_0=\Sigma_{BC}$
and by writing $K = k_{A + 2 B \to B + C} / k_{B + C \to A + 2 B}$,
we obtain a quadratic equation in $[C]$,
which can be solved as
\begin{equation*}
[C] = \frac{K+\Sigma_{BC}}{2} \pm
\sqrt{\left(\frac{K+\Sigma_{BC}}{2}\right)^2
- K \, \Sigma_{AC}} \, .
\end{equation*}
Depending on the equilibrium constant $K$ and the initial values $\Sigma_{AC}$ and $\Sigma_{BC}$
(which determine a stoichiometric compatibility class),
the quadratic equation has 0, 1, or 2 solutions with $[C]>0$.
If additionally $[A]=\Sigma_{AC}-[C]>0$ and $[B]=\Sigma_{BC}-[C]>0$,
then $([A],[B],[C])^T$ is a complex balancing equilibrium.
Obviously, a stoichiometric compatibility class
contains 0, 1, or 2 complex balancing equilibria;
it turns out that each case is realized.

% %%%%%%%%%% %%%%%%%%%% %%%%%%%%%% %%%%%%%%%% %%%%%%%%%% %%%%%%%%%% %%%%%%%%%% %%%%%%%%%% %%%%%%%%%%  %%%%%%%%%%

\section{Conclusion}

CRNT establishes intriguing results about the ODEs associated with mass action systems,
in particular about the existence, uniqueness, and stability of equilibria.
For application in molecular biology, however,
one would like to have a framework
that permits rate laws more general than mass action kinetics.

In this paper we show that the suggested notion of generalized mass action systems,
which admits arbitrary nonnegative power-law rate functions,
allows to generalize several results of CRNT.
In particular, Theorem~\ref{genthm} essentially states
that if the sign vectors of the stoichiometric and the \kinsub subspace coincide,
there exists a unique complex balancing equilibrium in every stoichiometric compatibility class.

A natural next step is to study other results of CRNT in the case of generalized mass action kinetics,
most importantly,
to analyze the stability of complex balancing equilibria, which is guaranteed in the classical case.
Further, genuinely biological notions such as the {\em robustness} \cite{BatchelorGoulian2003,ShinarFeinberg2010,Steuer2011}
of chemical reaction networks
can be addressed in a framework with more realistic kinetics.

% %%%%%%%%%% %%%%%%%%%% %%%%%%%%%% %%%%%%%%%% %%%%%%%%%% %%%%%%%%%% %%%%%%%%%% %%%%%%%%%% %%%%%%%%%%  %%%%%%%%%%

\Appendix

\section{Sign vectors and face lattices}

In this section,
we outline some facts on the relation between sign vectors of vector spaces
and face lattices of polyhedral cones and polytopes.
For further details we refer to~\cite[Ch.~7]{BachemKern1992} and~\cite[Ch.~2, 6]{Ziegler1995}
and to~\cite{Richter-GebertZiegler1997,BjornerLasSturmfelsWhiteZiegler1999} in the context of oriented matroids.

We obtain the \emph{sign vector} $\sigma(x)\in \{-,0,+\}^n$ of a vector $x \in \RR^n$
by applying the sign function componentwise, and we write
\[
\sigma(S) = \{ \sigma(x) \st x \in S \}
\]
for a subset $S \subseteq \RR^n$.

Two sign vectors $\varsigma,\tau \in \{-,0,+\}^n$ are {\em orthogonal},
if $\varsigma_k \tau_k = 0$ for all $k$
or if there exist $k,l$ with $\varsigma_k \tau_k = -$ and $\varsigma_l \tau_l = +$
(where the product on $\{-,0,+\}$ is defined in the obvious way);
we write $\varsigma \bot \tau$.
Note that $\varsigma \bot \tau$ if and only if there are orthogonal vectors $x,y \in \RR^n$
such that $\sigma(x) = \varsigma$ and $\sigma(y) = \tau$.

The {\em orthogonal complement} $\Sigma^\bot$ of a set $\Sigma \subseteq \{-,0,+\}^n$ is defined by
\[
\Sigma^\bot = \{ \varsigma \in \{-,0,+\}^n \st \varsigma \bot \tau \text{ for all } \tau \in \Sigma \} \, .
\]
The sign vectors of the orthogonal complement of a subspace $S\subseteq \RR^n$ are given by
\begin{equation}
\label{eq:orthsubspace}
\sigma(S^\bot)=\sigma(S)^\bot;
\end{equation}
see for example~\cite[Prop.~6.8.]{Ziegler1995}.

Let $V = (v^1, \ldots , v^d) \in \RR^{n\times d}$ with $n \ge d$ have full rank.
Then $V^T = ( w^1 , \ldots , w^n )$ is called a {\em vector configuration} (of $n$ vectors in $\RR^d$).
With $\lambda \in \RR^d$ and $v = \sum_{j=1}^d \lambda_j v^j \in \im(V)$,
we obtain $v_k = \sum_{j=1}^d \lambda_j v^j_k = \sum_{j=1}^d \lambda_j w^k_j = \langle \lambda , w^k \rangle$.
Hence, $\sigma(v)$ describes the positions of the vectors $w^1, \ldots, w^n$
relative to the hyperplane with normal vector $\lambda$.

The \emph{face lattice} of the cone $C$ generated by $w^1,\ldots,w^n$
can be recovered from the sign vectors of the subspace generated by $v^1,\ldots,v^d$.
It is  the set $\sigma(\im(V)) \cap \{0,+\}^n$ with the partial order induced by the relation $0<+$, which we denote by
\[
\sigma(\im(V))_\ge = \sigma(\im(V)) \cap \{0,+\}^n.
\]
A face $f$ of $C$ is characterized by a supporting hyperplane with normal vector $\lambda \in \RR^d$
such that $\langle \lambda , w^k \rangle = 0$ for generators $w^k$ lying on $f$
and $\langle \lambda , w^k \rangle > 0$ for the remaining $w^k$ (thus lying on the positive side of the hyperplane).

A cone $C$ is called {\em pointed} if $C \cap (-C) = \{ 0 \}$
or equivalently if it has vertex $0$. A cone is pointed if and only if it has an extreme ray, and
every pointed polyhedral cone is the conical hull of its finitely many extreme rays.
Note that if $(+,\ldots,+)^T \in \sigma(\im(V))$, the cone $C$ generated by $V^T$ is pointed.

As for polyhedral cones, the faces of a polytope form a lattice.
Two polytopes are \emph{combinatorially equivalent} if their face lattices are isomorphic.
Combinatorial equivalence corresponds to the existence of a piecewise linear homeomorphism between the polytopes
that restricts to homeomorphisms between faces.

The sign vectors $\sigma(\im(V))$ of the subspace $\im(V)$
can be equivalently characterized by the {\em chirotope} $\chi_{V^T}$ of the point configuration $V^T$,
which is defined as the map
\begin{align*}
\chi_{V^T} \colon & \{1,\ldots,n\}^d \to \{-,0,+\} \\
& (i_1, \ldots, i_d) \mapsto \sign(\det(w^{i_1}, \ldots , w^{i_d})) \, .
\end{align*}
The chirotope records for each $d$-tuple of vectors
if it forms a positively (or negatively) oriented basis of $\RR^d$ or it is not a basis.
It can for example be used to test algorithmically if the sign vectors of two subspaces are equal,
that is, to decide if $\sigma(\im(V)) = \sigma(\im(\tilde{V}))$
for two matrices $V, \tilde{V} \in \RR^{n \times d}$.

\subsection*{Acknowlegdements}

We thank Josef Hofbauer for pointing our attention to Degree Theory
and G\"unter M.~Ziegler for answering our questions on Oriented Matroids.
We also acknowledge fruitful discussions with Fran\c{c}ois Boulier and Fran\c{c}ois Lemaire on an earlier version of the paper
and numerous helpful comments from an anonymous referee.


\begin{thebibliography}{10}

\bibitem{BachemKern1992}
{\sc A.~Bachem and W.~Kern}, {\em Linear programming duality}, Springer-Verlag,
  Berlin, 1992.
\newblock An introduction to oriented matroids.

\bibitem{Bajzer2008}
{\sc Z.~Bajzer, M.~Huzak, K.~L. Neff, and F.~G. Prendergast}, {\em
  {M}athematical analysis of models for reaction kinetics in intracellular
  environments}, Math. Biosci., 215 (2008), pp.~35--47.

\bibitem{BanajiCraciun2009}
{\sc M.~Banaji and G.~Craciun}, {\em Graph-theoretic approaches to injectivity
  and multiple equilibria in systems of interacting elements}, Commun. Math.
  Sci., 7 (2009), pp.~867--900.

\bibitem{BanajiCraciun2010}
\leavevmode\vrule height 2pt depth -1.6pt width 23pt, {\em Graph-theoretic
  criteria for injectivity and unique equilibria in general chemical reaction
  systems}, Adv. in Appl. Math., 44 (2010), pp.~168--184.

\bibitem{BanajiDonnellBaigent2007}
{\sc M.~Banaji, P.~Donnell, and S.~Baigent}, {\em {$P$} matrix properties,
  injectivity, and stability in chemical reaction systems}, SIAM J. Appl.
  Math., 67 (2007), pp.~1523--1547.

\bibitem{BatchelorGoulian2003}
{\sc E.~Batchelor and M.~Goulian}, {\em {R}obustness and the cycle of
  phosphorylation and dephosphorylation in a two-component regulatory system},
  Proc. Natl. Acad. Sci. U.S.A., 100 (2003), pp.~691--696.

\bibitem{BjornerLasSturmfelsWhiteZiegler1999}
{\sc A.~Bj{\"o}rner, M.~Las~Vergnas, B.~Sturmfels, N.~White, and G.~M.
  Ziegler}, {\em Oriented matroids}, vol.~46, Cambridge University Press,
  Cambridge, second~ed., 1999.

\bibitem{Clegg1984}
{\sc J.~S. Clegg}, {\em {P}roperties and metabolism of the aqueous cytoplasm
  and its boundaries}, Am. J. Physiol., 246 (1984), pp.~R133--151.

\bibitem{CraciunDickensteinShiuSturmfels2009}
{\sc G.~Craciun, A.~Dickenstein, A.~Shiu, and B.~Sturmfels}, {\em Toric
  dynamical systems}, J. Symbolic Comput., 44 (2009), pp.~1551--1565.

\bibitem{CraciunFeinberg2005}
{\sc G.~Craciun and M.~Feinberg}, {\em Multiple equilibria in complex chemical
  reaction networks. {I}. {T}he injectivity property}, SIAM J. Appl. Math., 65
  (2005), pp.~1526--1546 (electronic).

\bibitem{CraciunGarcia-PuenteSottile2010}
{\sc G.~Craciun, L.~Garcia-Puente, and F.~Sottile}, {\em Some geometrical
  aspects of control points for toric patches}, in Mathematical Methods for
  Curves and Surfaces, M~D\ae{}hlen, M~S Floater, T~Lyche, J-L Merrien,
  K~Morken, and L~L Schumaker, eds., vol.~5862 of LNCS, 2010, pp.~111--135.

\bibitem{CraciunHeltonWilliams2008}
{\sc G.~Craciun, J.~W. Helton, and R.~J. Williams}, {\em Homotopy methods for
  counting reaction network equilibria}, Math. Biosci., 216 (2008),
  pp.~140--149.

\bibitem{Feinberg1972}
{\sc M.~Feinberg}, {\em Complex balancing in general kinetic systems}, Arch.
  Rational Mech. Anal., 49 (1972/73), pp.~187--194.

\bibitem{Feinberg1979}
\leavevmode\vrule height 2pt depth -1.6pt width 23pt, {\em Lectures on chemical
  reaction networks}.
\newblock
  \url{http://www.che.eng.ohio-state.edu/∼FEINBERG/LecturesOnReactionNetworks},
  1979.
\newblock Notes of lectures given at the Mathematics Research Center of the
  University of Wisconsin in 1979.

\bibitem{Feinberg1987}
\leavevmode\vrule height 2pt depth -1.6pt width 23pt, {\em {Chemical reaction
  network structure and the stability of complex isothermal reactors--I. The
  deficiency zero and deficiency one theorems}}, Chem. Eng. Sci., 42 (1987),
  pp.~2229--2268.

\bibitem{Feinberg1995a}
\leavevmode\vrule height 2pt depth -1.6pt width 23pt, {\em The existence and
  uniqueness of steady states for a class of chemical reaction networks}, Arch.
  Rational Mech. Anal., 132 (1995), pp.~311--370.

\bibitem{Feinberg1995b}
\leavevmode\vrule height 2pt depth -1.6pt width 23pt, {\em Multiple steady
  states for chemical reaction networks of deficiency one}, Arch. Rational
  Mech. Anal., 132 (1995), pp.~371--406.

\bibitem{FeinbergHorn1977}
{\sc M.~Feinberg and F.~J.~M. Horn}, {\em Chemical mechanism structure and the
  coincidence of the stoichiometric and kinetic subspaces}, Arch. Rational.
  Mech. Anal., 66 (1977), pp.~83--97.

\bibitem{FonsecaGangbo1995}
{\sc I.~Fonseca and W.~Gangbo}, {\em Degree theory in analysis and
  applications}, vol.~2 of Oxford Lecture Series in Mathematics and its
  Applications, The Clarendon Press Oxford University Press, New York, 1995.

\bibitem{Fulton1993}
{\sc W.~Fulton}, {\em Introduction to toric varieties}, vol.~131, Princeton
  University Press, Princeton, NJ, 1993.

\bibitem{GrimaSchnell2006}
{\sc R.~Grima and S.~Schnell}, {\em {A} systematic investigation of the rate
  laws valid in intracellular environments}, Biophys. Chem., 124 (2006),
  pp.~1--10.

\bibitem{Gunawardena}
{\sc J.~Gunawardena}, {\em Chemical reaction network theory for in-silico
  biologists}.
\newblock \url{http://vcp.med.harvard.edu/papers/crnt.pdf}, 2003.

\bibitem{Halling1989}
{\sc P.~J. Halling}, {\em {D}o the laws of chemistry apply to living cells?},
  Trends Biochem. Sci., 14 (1989), pp.~317--318.

\bibitem{Horn1972}
{\sc F.~Horn}, {\em Necessary and sufficient conditions for complex balancing
  in chemical kinetics}, Arch. Rational Mech. Anal., 49 (1972/73),
  pp.~172--186.

\bibitem{HornJackson1972}
{\sc F.~Horn and R.~Jackson}, {\em General mass action kinetics}, Arch.
  Rational Mech. Anal., 47 (1972), pp.~81--116.

\bibitem{Kopelman1986}
{\sc R.~Kopelman}, {\em Rate processes on fractals: Theory, simulations, and
  experiments}, J. Statist. Phys., 42 (1986), pp.~185--200.

\bibitem{Kopelman1988}
\leavevmode\vrule height 2pt depth -1.6pt width 23pt, {\em {F}ractal reaction
  kinetics}, Science, 241 (1988), pp.~1620--1626.

\bibitem{Kuthan2001}
{\sc H.~Kuthan}, {\em {S}elf-organisation and orderly processes by individual
  protein complexes in the bacterial cell}, Prog. Biophys. Mol. Biol., 75
  (2001), pp.~1--17.

\bibitem{Lloyd1978}
{\sc N.~G. Lloyd}, {\em Degree theory}, Cambridge University Press, Cambridge,
  1978.

\bibitem{Neff2011}
{\sc K.~L. Neff, C.~P. Offord, A.~J. Caride, E.~E. Strehler, F.~G. Prendergast,
  and Z.~Bajzer}, {\em {V}alidation of fractal-like kinetic models by
  time-resolved binding kinetics of dansylamide and carbonic anhydrase in
  crowded media}, Biophys. J., 100 (2011), pp.~2495--2503.

\bibitem{PachterSturmfels2005}
{\sc L.~Pachter and B.~Sturmfels}, {\em Statistics}, in Algebraic statistics
  for computational biology, Cambridge Univ. Press, New York, 2005, pp.~3--42.

\bibitem{PerezMillanDickensteinShiuConradi2012}
{\sc M.~P\'erez Mill\'an, A.~Dickenstein, A.~Shiu, and C.~Conradi}, {\em
  Chemical reaction systems with toric steady states}, Bulletin of Mathematical
  Biology, 74 (2012), pp.~1027--1065.

\bibitem{Richter-GebertZiegler1997}
{\sc J.~Richter-Gebert and G.~M. Ziegler}, {\em Oriented matroids}, in Handbook
  of discrete and computational geometry, CRC, Boca Raton, FL, 1997,
  pp.~111--132.

\bibitem{Savageau1969}
{\sc M.~A. Savageau}, {\em {B}iochemical systems analysis. {I}. {S}ome
  mathematical properties of the rate law for the component enzymatic
  reactions}, J. Theor. Biol., 25 (1969), pp.~365--369.

\bibitem{Savageau1976}
\leavevmode\vrule height 2pt depth -1.6pt width 23pt, {\em Biochemical Systems
  Analysis: Study of Function and Design in Molecular Biology}, Addison-Wesley
  Educational Publishers Inc, 1976.

\bibitem{Savageau1992}
\leavevmode\vrule height 2pt depth -1.6pt width 23pt, {\em A critique of the
  enzymologist's test tube}, in Fundamentals of Medical Cell Biology, E.~E.
  Bittar, ed., vol.~3A, JAI Press Inc., Greenwich, CT, 1992.

\bibitem{Savageau1995}
\leavevmode\vrule height 2pt depth -1.6pt width 23pt, {\em {M}ichaelis-{M}enten
  mechanism reconsidered: implications of fractal kinetics}, J. Theor. Biol.,
  176 (1995), pp.~115--124.

\bibitem{SchnellTurner2004}
{\sc S.~Schnell and T.~E. Turner}, {\em {R}eaction kinetics in intracellular
  environments with macromolecular crowding: simulations and rate laws}, Prog.
  Biophys. Mol. Biol., 85 (2004), pp.~235--260.

\bibitem{ShinarFeinberg2010}
{\sc G.~Shinar and M.~Feinberg}, {\em {S}tructural sources of robustness in
  biochemical reaction networks}, Science, 327 (2010), pp.~1389--1391.

\bibitem{Steuer2011}
{\sc R.~Steuer, S.~Waldherr, V.~Sourjik, and M.~Kollmann}, {\em {{R}obust
  signal processing in living cells}}, PLoS Comput. Biol., 7 (2011).

\newpage

\bibitem{Sturmfels2002}
{\sc B.~Sturmfels}, {\em Solving systems of polynomial equations}, Published
  for the Conference Board of the Mathematical Sciences, Washington, DC, 2002.

\bibitem{Ziegler1995}
{\sc G.~M. Ziegler}, {\em Lectures on polytopes}, Springer-Verlag, New York,
  1995.

\end{thebibliography}
\end{document}